\newtheorem{theorem}{Theorem}[section] 
\newtheorem{lemma}[theorem]{Lemma}
\newtheorem{prop}[theorem]{Proposition}
\newtheorem{remark}[theorem]{Remark}
\numberwithin{equation}{section}
\newcommand{\ol}{\overline}
\newcommand{\eps}{\varepsilon}
\begin{document}

\title{The radius of a self-repelling star polymer}
\author{Carl Mueller}
\address{Carl Mueller
\\Department of Mathematics
\\University of Rochester
\\Rochester, NY  14627}
\email{carl.e.mueller@rochester.edu}
\author{Eyal Neuman}
\address{Eyal Neuman
\\Department of Mathematics
\\Imperial College
\\London, UK}
\email{e.neumann@imperial.ac.uk}
\thanks{CM was partially supported by Simons grant 513424.} 
\keywords{Polymers, self-avoiding, star polymers}
\subjclass[2020]{Primary, 60G60; Secondary,  60G15.}

\begin{abstract} 
We study the effective radius of weakly self-avoiding star polymers in one, 
two, and three dimensions.  Our model includes $N$ Brownian motions up to 
time $T$, started at the origin and subject to exponential penalization 
based on the amount of time they spend close to each other, or close to 
themselves.  The effective radius measures the typical distance from the 
origin.  Our main result gives estimates for the effective radius where in 
two and three dimensions we impose the restriction that $T \leq N$.  One of 
the highlights of our results is that in two dimensions, we find that the 
radius is proportional to $T^{3/4}$, up to logarithmic corrections. Our result may shed light on the 
well-known conjecture that for a single self-avoiding random walk in two 
dimensions, the end-to-end distance up to time $T$ is roughly $T^{3/4}$.  
\end{abstract}

\maketitle

\section{Introduction}
\label{section:introduction}

Random polymer models have caught the imagination of many mathematicians.  
Polymers are all around us, and their behavior presents 
attractive mathematical challenges, many of which still defy solution.  See 
Doi and Edwards \cite{DE09} for a wide-ranging treatment from the physical 
point of view, and Madras and Slade \cite{MS13}, den Hollander \cite{dH09}, 
Giacomin \cite{Gia07}, and Bauerschmidt et.  al. \cite{BDGS12} for rigorous 
mathematical results.   Van der Hofstad and K\"onig \cite{vdHK2001} and 
van der Hofstad, den Hollander, and K\"onig \cite{HHK97}, \cite{HHK03} 
discuss the one-dimensional situation for the related Edwards model.  
Bauerschmidt, Brydges, and Slade \cite{BBS19} have recently developed a 
rigorous version of the renormalization group and applied it to the 
four-dimensional situation.  

In continuous time, we can view a polymer as a Brownian motion with 
penalization for self-intersections.  Here the time parameter represents 
distance along the polymer.  For $T>0$, let $(B_t)_{t\in[0,T]}$ be a 
standard Brownian motion in $\mathbb{R}^d$, defined on a filtered 
probability space 
$(\Omega,(\mathcal{F}_t)_{t\in[0,T]},\mathcal{F},P_T)$.  For a 
probability measure $P$, we write $E^P$ for the corresponding 
expectation.  Since Brownian motion does not have self-intersections in high 
dimensions, we study close approaches instead. For any $r>0$ let 
$\mathbf{B}_r(x)\subset\mathbb{R}^d$ be the open ball of radius $r$ centered at 
$x\in\mathbb{R}^d$, and define
\begin{equation*}
L_T(x) = L_{d,T}(x) := \int_{0}^{T}\mathbf{1}_{\mathbf{B}_1(x)}(B_t)dt
\end{equation*}
for $T>0$.  For $\beta>0$, the typical penalization term is
\begin{equation*}
\mathcal{E}_T = \mathcal{E}_{d,\beta,T} 
:= \exp\left(-\beta\int_{\mathbb{R}^d}L_T(x)^2dx\right).
\end{equation*}
Then we define the penalized measure as
\begin{equation*}
\begin{split}
Q_T(A) &= Q_{d,\beta,T}(A) := \frac{1}{Z_T}
   E^{P_T}\left[\mathbf{1}_A\mathcal{E}_T\right]  \\
Z_T &= Z_{d,\beta,T} = E^{P_T}\left[\mathcal{E}_T\right] 
\end{split}
\end{equation*}
for $A\in\mathcal{F}$.  For simplicity of notation, we have suppressed 
some of the subscripts.  

With these definitions, we call our process weakly self-avoiding Brownian 
motion.  

Note that all of the above quantities depend implicitly on $d$ and all but 
$P,\mathbf{B},(B_t),L$ depend on $\beta$ as well. For simplicity of notation 
we suppress these dependencies, and we will use similar simplified notation 
throughout the paper.  Furthermore, $C$ will stand for a constant which 
could change from line to line, and may also depend on $d$.  

One of the most important problems about weakly self-avoiding Brownian 
motion is to study the radius of the polymer, often defined as the standard 
deviation of the end-to-end distance,
\begin{equation*}
R_T = R_{d,\beta,T} = \left(E^{Q_T}\left[B_T^2\right]\right)^{1/2}.
\end{equation*}
A well-known conjecture from physics states that there exists a scaling 
exponent $\nu_d$ not depending on $\beta$ such that, in some unspecified sense,
\begin{equation*}
R_{d,\beta,T} \approx T^{\nu_d}
\end{equation*}
as $T\to\infty$.  All that is rigorously known is that $\nu_1=1$ 
(Bolthausen \cite{Bol90}, Greven and den Hollander \cite{GdH93}) 
and that $\nu_d=1/2$ for $d\ge5$ (Hara and Slade \cite{HS92:RMP}).  
It is believed that $\nu_2=3/4$, and there are 
connections to SLE$_{8/3}$ (see Lawler, Schramm, and Werner \cite{LSW04}). 
This conjecture has received enormous 
attention, and Duminil-Copin and Smirnov \cite{DCS12}, page 9, write
``The derivation of these exponents seems to be one of the most challenging 
problems in probability.''  In \cite{BDGS12} Section 1.5.2, we learn that 
``Almost nothing is known rigorously about $\nu$ in dimensions 2, 3, 4. 
It is an open problem to show that the mean-square displacement grows at 
least as rapidly as simple random walk, and grows more slowly than 
ballistically''.  In our language, this means that for $d\in\{2,3,4\}$ it has 
not been proved that $\nu_d\ge1/2$ or $\nu_d<1$. 
One of the highlights of the present work is that we do obtain the exponent
$3/4$ in $d=2$, see Theorem \ref{thm-radius}.  Of course this does not settle the above 
conjecture, and our methods are unlikely to settle it.

In the real world, many polymers are branched and do not consist of a single 
strand.  van der Hofstad and K\"onig \cite{vdHK2001}, (Section 3.1 pages 16 
- 18) give a short discussion of branched polymers taking values in 
$\mathbb{R}$,  and present a conjecture for the growth of the radius.  
As far as we know, the conjecture is still open.  Slade and van der Hofstad
\cite{SH03} use the lace expansion to study the radius for branched polymers 
in high dimensions, and show that they behave as if there were no 
self-avoidance.  

Clearly, self-avoiding polymers present difficult challenges in low 
dimensions. In this paper we focus on a related problem, the case of star 
polymers.  Star polymers are polymers joined at the point $t=0$, and 
there is an extensive physics literature about them. See the seminal paper 
of Daoud and Cotton \cite{DC82}, and for more recent work see
\cite{AKPI12} and \cite{TVM21}.

Since self-avoiding polymers present difficult challenges in low dimensions, 
we focus on the case of star polymers, which are not too different from 
random walks.  Star polymers are polymers joined at the point $t=0$, and 
there is an extensive physics literature about them. See the seminal paper 
of Daoud and Cotton \cite{DC82}, and for more recent work see
\cite{AKPI12} and \cite{TVM21}.

We now give a brief overview of the results from \cite{DC82} 
which are relevant to this paper, and ask the reader to keep in mind that 
these results come from mathematically non-rigorous arguments.  

First, the authors formulate a notion of radius relevant to star polymers.  
Then, for a given value of $T$, they discuss two regions:
\begin{enumerate}
\item The unswollen region closer to the origin, where many paths overlap.  
\item The swollen region far from the origin, where pairs of paths rarely 
overlap.  
\end{enumerate}
Our results deal with region (1). One of the principal results of 
\cite{DC82} is equation (19), which states that for very long chains, or for 
high temperatures,
\begin{equation} \label{eq:DC-result}
R\approx N^{3/5}v^{1/5}f^{1/5}\ell.    
\end{equation}
To aid the comparison with our results, we give a dictionary for the notation
in the above equation.  

\medskip
\begin{tabular}{|l|l|l|}\hline
Quantity & Our Notation & From \cite{DC82} \\\hline
Number of branches &  $N$  & $f$  \\
Length along the polymer &  $T$  & $N$  \\
Self-avoidance parameter &  $\beta$  & $v$  \\
Length of each polymer element & not included & $\ell$  \\ \hline
\end{tabular}
\medskip

\noindent
Translating to our notation, \eqref{eq:DC-result} means, for $d=3$, that
\begin{equation}  \label{eq:physical-result}
R_T \approx \beta^{1/5}N^{1/5}T^{3/5}
\end{equation}
for large values of $T$.  Again, this is a nonrigorous physical result.
Our main result, Theorem \ref{thm-radius}, gives a range of values for $R_T$ 
in $d=3$, that includes \eqref{eq:physical-result}. The physical reasoning in Daoud and Cotton's
paper yields the following conjecture for the two dimensional model,  
\begin{equation} \label{eq:Bish-result}
R\approx \ \beta^{1/4} N^{1/4} T^{3/4}.    
\end{equation}
We refer to Bishop et al. \cite{Bishop83} who studied the two dimensional star polymer model. In Theorem \ref{thm-radius} we verify \eqref{eq:Bish-result} up to a logarithmic constant. 

We will now give rigorous definitions, and redefine the notation $P_T$, 
$Q_T$, $L_T(\cdot)$, $\mathcal{E}_T$, $R_T$ used earlier.  Assume that for each 
$T>0$ we have a filtered probability space 
$(\Omega,\mathcal{F},(\mathcal{F}_t)_{t\in[0,T]},P_T)$, and on this 
space, for $d\ge1$, we have a collection 
$(B^{(k)}_t)_{t\in[0,T];\; k\in\{1,\dots,N\}}$
of independent adapted $\mathbb{R}^d$-valued standard Brownian motions 
started at the origin. Without loss of generality, we can assume that 
$\Omega=(C[0,T])^N$ is canonical path space for the Brownian motions.

We define a weakly self-avoiding model as follows.  For
$N\in\mathbb{N}=\{1,2,\dots\}$, $T>0$, $x\in\mathbb{R}^d$, and 
$\mathbf{B}_1(x)\subset\mathbb{R}^d$ as before, consider the occupation measure
\begin{equation} \label{eq:l-time}
L_T(x) = L_{T,d,N}(x) := \sum_{k=1}^{N}|\{t\in[0,T]: B^{(k)}_t\in\mathbf{B}_1(x)\}|
\end{equation}
where $|S|$ denotes the Lebesgue measure of a Borel set $S\subset\mathbb{R}$.  
Our penalization factor is defined as
\begin{equation*}
\mathcal{E}_{T} := \mathcal{E}_{d,\beta,N,T} 
= \exp\left(-\beta \int_{\mathbb{R}^d}L_T(x)^2dx\right).
\end{equation*}
We define a probability $Q_T=Q_{T,d,N,\beta}$ and a normalizing factor 
$Z_T=Z_{T,d,N,\beta}$ as
\begin{align} \label{eq:Q-def}
Q_T(S) := \frac{1}{Z_T}E^{P_T}\big[\mathbf{1}_S\mathcal{E}_T\big],
&& Z_T := E^{P_T}\big[\mathcal{E}_T\big].
\end{align}
for $S\in\mathcal{F}_T$.  

For any set of real numbers $A= \{a_1,...,a_N\}$ we denote by $\textrm{med}(A)$
the median of the set. We define the radius of the star polymer as follows, 
\begin{equation} \label{rad-def} 
R_T=R_{d,N,T}
:=\textrm{med} \left(\left\{\sup_{t\in[0,T]}|B^{(k)}_t| \, 
        : k=1,...,N\right\}\right).
\end{equation}
Our goal is to study the behavior of $R_T$ under the measure $Q_T$. 

For convenience we have chosen a different definition of radius than 
before.  But there are already several definitions of the radius, for 
example see Fixman \cite{Fix62}.

Our main result is stated in the following theorem.  
 \begin{theorem}  \label{thm-radius}  
There exists positive constants $C_d, c_d$ and $C$ not depending on $N,T$ and $\beta$ such that, 
\begin{itemize}
\item[ \textbf{(i)}] for $d=1$, for all $\beta, N \geq 1$, and $T \geq c_1^{-1}$, 
\begin{equation*}
Q_T\left( c_1\beta^{\frac{1}{3}}N^{\frac{1}{3}} T \leq R_T \leq C_1\beta^{\frac{1}{3}}N^{\frac{1}{3}} T \right) 
\geq 1 - \exp\left(-C \beta^{\frac{2}{3}} N^{\frac{5}{3}} T\right), 
\end{equation*}
\item[ \textbf{(ii)}] for $d=2$, for all $\beta \geq 1$, $N \geq  (c_2^{-4/3} \vee1) $ and $  c_2^{-4/3}\leq T \leq N$, 
\begin{multline*} 
Q_T\left( c_2\beta^{\frac{1}{4}}N^{\frac{1}{4}} T^{\frac{3}{4}} (\log(\beta T))^{-\frac{1}{2}} \leq  R_T \leq C_2\beta^{\frac{1}{4}}N^{\frac{1}{4}} T^{\frac{3}{4}}  (\log(\beta T))^{\frac{1}{2}}\right) 
\\ \geq 1 - \exp\left(-C \beta^{\frac{1}{2}} N^{\frac{3}{2}} T^{\frac{1}{2}} \log(\beta T)\right), 
\end{multline*} 
\item[ \textbf{(iii)}] for $d=3$, for all $\beta  \geq 1$, $N \geq (c_3^{-2}\vee 1)$ and $c_3^{-2} \leq T \leq N$, 
\begin{multline*}
Q_T\left(  c_3\beta^{\frac{1}{6}}N^{\frac{1}{6}} T^{\frac{1}{2}}(\log(\beta T))^{-\frac{1}{3}}  \leq R_T \leq  C_3\beta^{\frac{1}{4}}N^{\frac{1}{4}} T^{\frac{3}{4}}  (\log(\beta T))^{\frac{1}{2}}\right) \\
\geq 1 - \exp\left(-C  \beta^{\frac{1}{2}} N^{\frac{3}{2}} T^{\frac{1}{2}} \log(\beta T)\right).
\end{multline*} 
\end{itemize} 
\end{theorem}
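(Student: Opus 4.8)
The plan is to obtain matching upper and lower bounds on $R_T$ by first controlling the partition function $Z_T$ from below, and then using $Z_T$ as a normalizer to convert tail estimates for the unpenalized Brownian motions into $Q_T$-probability bounds. The overall strategy rests on a \emph{scaling heuristic}: if a single strand reaches distance $r$ by time $T$, and the $N$ strands spread out roughly uniformly over a ball of radius $r$, then the occupation measure $L_T(x)$ is of order $NT/r^d$ on that ball, so $\int L_T^2 \, dx \sim (NT)^2/r^d$, and the penalization cost is $\exp(-\beta (NT)^2 r^{-d})$. Balancing this energy against the Gaussian entropy cost $\exp(-c r^2/T)$ of forcing the strands out to radius $r$ gives the optimal $r$, namely $r \sim (\beta N^2 T^3)^{1/(d+2)}$, which reproduces $c_1 \beta^{1/3} N^{1/3} T$ in $d=1$, $\beta^{1/4} N^{1/4} T^{3/4}$ in $d=2$, and $\beta^{1/5} N^{1/5} T^{3/5}$ in $d=3$. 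The exponents in the theorem, and the discrepancy between the $d=3$ lower and upper bounds, reflect how tightly one can make the two sides of this balance rigorous.

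First I would prove the \textbf{lower bound on $Z_T$}. The idea is to restrict the expectation defining $Z_T$ to a ``good event'' $G$ on which the strands are well-separated, so that $L_T(x)^2$ has small integral. Concretely, one can tile a ball of radius $\asymp r$ into $N$ roughly disjoint sub-regions (cones or boxes) and ask each $B^{(k)}$ to remain in its own sub-region while still being a free Brownian path; by independence the probability of $G$ factorizes, and by the scaling of Brownian motion $P_T(G) \geq \exp(-C r^2/T)$ up to polynomial corrections. On $G$ one estimates $\int L_T(x)^2\, dx$: since at most $O(1)$ strands visit any given unit ball, $L_T(x)^2 \lesssim \sum_k (\text{local time of }B^{(k)})^2$, whose spatial integral is $O(NT^2)$ in $d=1$ and $O(NT \log T)$-ish in $d\geq 2$ after accounting for the ball geometry; more carefully one localizes to the annulus and gets $\int L_T^2 \lesssim (NT)^2 r^{-d}$. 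Choosing $r$ at the optimal scale yields $Z_T \geq \exp(-C(\beta N^2 T^3)^{2/(d+2)}/\text{something})$, i.e. $Z_T \geq \exp\big(-C\beta^{1/2}N^{3/2}T^{1/2}\log(\beta T)\big)$-type bounds matching the exponents appearing on the right-hand sides of the theorem.

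Next, the \textbf{upper bound on $R_T$}: I want to show $Q_T(R_T > C_d \cdot r_{\max}) $ is super-exponentially small, where $r_{\max} = \beta^{1/4}N^{1/4}T^{3/4}(\log\beta T)^{1/2}$ in $d=2,3$. Here $R_T > \rho$ means at least $N/2$ of the strands have $\sup_t |B^{(k)}_t| > \rho$. Under $P_T$ this forces those strands to travel far, costing Gaussian probability $\exp(-c N \rho^2/T)$ — but this alone does not beat $Z_T$. The key is that once $\geq N/2$ strands are squeezed out near radius $\rho$ but \emph{all} strands start at the origin, they must bunch up near the origin at early times, forcing $\int L_T^2$ to be large; quantitatively, by a layer-cake/pigeonhole argument over dyadic shells, $\int L_T^2 \, dx \gtrsim (N T)^2 \rho^{-d}$ on this event regardless of the configuration, so $\mathcal E_T \leq \exp(-\beta (NT)^2 \rho^{-d})$. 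Dividing by the lower bound on $Z_T$ and choosing $\rho = C_d r_{\max}$ makes $\beta(NT)^2\rho^{-d} - (\text{entropy})$ exceed the $-\log Z_T$ term with room to spare, giving the stated super-exponential bound. For the \textbf{lower bound on $R_T$} (showing $R_T \geq c_d r_{\min}$ with high probability), I would instead argue that if $R_T < \rho$ then at least $N/2$ strands are confined to $\mathbf B_\rho(0)$, which again by the pigeonhole/shell argument forces $\int L_T^2 \gtrsim (NT)^2 \rho^{-d}$; this is now a \emph{different} constraint than before because confinement itself is costly in entropy ($\exp(-cNT/\rho^2)$ per confined strand in $d\geq 2$), and combining the energy penalty with this confinement entropy cost against the $Z_T$ lower bound yields $Q_T(R_T < c_d r_{\min}) \leq \exp(-C\beta^{1/2}N^{3/2}T^{1/2}\log(\beta T))$, with $r_{\min}$ as in the theorem.

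The main obstacle, I expect, is the rigorous version of the claim ``$\int_{\mathbb R^d} L_T(x)^2\, dx \gtrsim (NT)^2 \rho^{-d}$ whenever the bulk of the strands is confined to radius $\rho$.'' This is really a geometric/combinatorial lower bound on the $L^2$ energy of an occupation measure of total mass $NT$ supported (mostly) in a ball of radius $\rho$; the honest worst case is that the mass spreads uniformly, giving exactly this order, but proving it requires care because $L_T$ is a random occupation measure built from continuous paths, not an arbitrary measure, and one must handle the strands that \emph{escape} the ball (they contribute little energy but carry the $R_T$ constraint) separately from those confined inside. In $d=3$ this is precisely where the lower and upper bounds fail to match: the confinement-entropy term $NT/\rho^2$ and the energy term $\beta (NT)^2 \rho^{-3}$ balance at a different $\rho$ ($\sim \beta^{1/6}N^{1/6}T^{1/2}$, weaker by the $\log$ factor) than the pure energy-vs-Gaussian balance governing the upper bound, so the two-sided estimate genuinely has a gap. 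A secondary technical point is getting the $\log(\beta T)$ powers right — these come from the discretization into dyadic shells and from large-deviation corrections to the Gaussian confinement probabilities — and from keeping the constants $c_d, C_d$ uniform in $\beta, N$ under the standing assumption $T \leq N$, which is what prevents the ``swollen region'' behavior from interfering.
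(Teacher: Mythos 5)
Your outline correctly identifies the two ingredients — a lower bound on $Z_T$ and bounds on $q_T^{(<)},q_T^{(>)}$ divided by $Z_T$ — and your treatment of the lower bound on $R_T$ (confinement to $\mathbf{B}_\rho$ forces $\int L_T^2 \gtrsim (NT)^2/\rho^d$ by a mass--vs--volume argument) is exactly right; the paper does this with a one-line Cauchy--Schwarz, not a dyadic-shell pigeonhole, and it is in fact elementary rather than the ``main obstacle''. But there are two real problems with the proposal.

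First, the upper bound on $R_T$ is backwards. You claim that on $\{R_T>\rho\}$ one still has $\int L_T^2\gtrsim (NT)^2\rho^{-d}$ because the strands must ``bunch up near the origin at early times''. This is false: for large $\rho$ the strands can spread over a ball of radius $\rho$, the occupation measure becomes diffuse, and $\int L_T^2$ is \emph{small}, not of order $(NT)^2\rho^{-d}$; the early-time pile-up near the origin contributes an amount that does not grow with $\rho$. Worse, you use this as a crutch because you claim the Gaussian cost $\exp(-cN\rho^2/T)$ ``alone does not beat $Z_T$''. It does: with $\rho=C_2\beta^{1/4}N^{1/4}T^{3/4}(\log\beta T)^{1/2}$ one has $N\rho^2/T=C_2^2\beta^{1/2}N^{3/2}T^{1/2}\log(\beta T)$, which dominates $-\log Z_T\leq C\beta^{1/2}N^{3/2}T^{1/2}\log(\beta T)$ once $C_2$ is large. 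The paper's upper-bound argument is purely entropic: a Chernoff bound on the number of escaping strands plus the reflection-principle tail $p_T\leq Cr^{-1}e^{-r^2/2T}$, then divide by $Z_T$. No energy estimate enters.

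Second, the bound on $Z_T$ is the genuinely technical part and your sketch of it is a different (and much less developed) route than the paper's. The paper applies Girsanov's theorem with a deterministic-magnitude, random-direction radial drift $|\lambda_k(t)|=\kappa(\alpha+1)t^\alpha$ with i.i.d.\ uniformly random directions $\theta_k$, then Jensen's inequality to reduce $\log Z_T$ to two terms: the explicit relative entropy $I_2$, and $I_1=E^{\mathbb P^\lambda}[\int L_T^2]$, which is estimated by detailed Gaussian density computations over various $(s,t,\theta)$ regimes (Sections \ref{sec-cross-2-3}--\ref{sec-local-1}). Your proposal instead restricts to a ``good event'' where strands are tiled into disjoint cones, which would require controlling both the confinement probability and the conditional energy; this may be workable but is not obviously so, and you do not address either the scaling of the tile sizes or how the self-intersection energy of a confined strand is controlled. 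Finally, your explanation of the $d=3$ gap (confinement entropy $NT/\rho^2$ vs.\ energy $\beta N^2T^2\rho^{-3}$) is off; in the paper the gap arises because the same $Z_T$ lower bound $\exp(-C\beta^{1/2}N^{3/2}T^{1/2}\log\beta T)$ is the binding term in both the energy balance (giving $\rho\sim\beta^{1/6}N^{1/6}T^{1/2}$) and the Gaussian balance (giving $\rho\sim\beta^{1/4}N^{1/4}T^{3/4}$), and those two scales simply disagree in $d=3$.
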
 
Note that the upper and lower bounds in part (iii) of Theorem 
\ref{thm-radius} include the physical result given in 
\eqref{eq:physical-result}.

The proof of Theorem \ref{thm-radius} is given in Section \ref{sec-results}. 

\section{strategy} 

Now we outline the strategy of our proof.  

In order to to study our radius $R_T$, we introduce the following events
\begin{equation} \label{eq:def-A}
\begin{split}
A^{(<)}_{T,r} &= \{R_T\le r\},  \\
A^{(>)}_{T,r} &= \{R_T\ge r\}.
\end{split}
\end{equation}
We will show that for appropriate functions $r_1(T),r_2(T)$,
\begin{equation*}
\begin{split}
\lim_{T\to\infty}Q_T\big(A^{(<)}_{T,r_1(T)}\big) &= 0,  \\
\lim_{T\to\infty}Q_T\big(A^{(>)}_{T,r_2(T)}\big) &= 0.
\end{split}
\end{equation*}
If we are given functions $r_i:(0,\infty)\to(0,\infty)$ with $i\in\{1,2\}$, we define
\begin{equation} \label{p-def} 
\begin{split}
q_T^{(<)} &= Q_T(A^{(<)}_{T,r_1(T)})
     = \frac{1}{Z_T}E^{P_T}\Big[\mathbf{1}_{A^{(<)}_{T,r_1(T)}}\mathcal{E}_T\Big],  \\
q_T^{(>)} &= Q_T(A^{(>)}_{T,r_2(T)})
     = \frac{1}{Z_T}E^{P_T}\Big[\mathbf{1}_{A^{(>)}_{T,r_2(T)}}\mathcal{E}_T\Big]. 
\end{split}
\end{equation}
Note that $q_T^{(<)},q_T^{(>)}$ implicitly depend on $r_1,r_2$ and also
$d,N,\beta$. Then
\begin{align} \label{eq:p1-est}
q_T^{(<)} &\le \frac{1}{Z_T}\sup_{\omega\in A^{(<)}_{T,r_1(T)}}\mathcal{E}_T(\omega),  \\
              \label{eq:p2-est}
q_T^{(>)} &\le \frac{1}{Z_T}E^{P_T}\Big[\mathbf{1}_{A^{(>)}_{T,r_2(T)}}\Big] 
   = \frac{1}{Z_T}P_T\big(A^{(>)}_{T,r_2(T)}\big). 
\end{align}

We first consider \eqref{eq:p1-est}.  Now \eqref{rad-def} shows that on
$A^{(<)}_{T,r_1(T)}$, at least $[N/2]$ Brownian motions satisfy 
$\sup_{t\in[0,T]}|B^{(k)}_t|\le r_1(T)$.  Here for any $x \in \mathbb R$, 
$[x]$ is the greatest integer less than or equal to $x$.  On 
$A^{(<)}_{T,r_1(T)}$, let $\{k_1,\dots,k_{[N/2]}\}$ be the first $[N/2]$ 
indices of the Brownian motions satisfying this condition, and define
$L_T^{(\text{med})}$ be the total occupation measure of these Brownian motions.

On $A^{(<)}_{T,r_1(T)}$ we have that $L^{(\text{med})}_T(\cdot)$ is 
supported on $\mathbf{B}_{r_1(T)+1}(0)$ and
$\int_{\mathbb{R}^d}L^{(\text{med})}_T(x)dx=[N/2]T$.  
The Cauchy-Schwarz inequality shows that among nonnegative functions $f$
supported on $\mathbf{B}_{r_1(T)+1}(0)$, such that 
$\int_{\mathbb{R}^d}f(x)dx=[N/2]T$, the minimum of $\int_{\mathbb{R}^d}f(x)^2dx$
is achieved when $f$ equals a constant $K$ on $\mathbf{B}_{r_1(T)+1}(0)$, 
and in that case
\begin{equation*}
K = \frac{[N/2]T}{V_d\cdot (r_1(T)+1)^d}, 
\end{equation*}
where $V_d$ is the volume the unit $d$-dimensional ball. 

So, on $A^{(<)}_{T,r_1}$ we have
\begin{equation*}
\begin{split}
\int_{\mathbb{R}^d}L_T(y)^2dy 
&\ge K^2|\mathbf{B}_{r_1(T)+1}(0)|  \\
&\ge \left(\frac{[N/2]T}{V_d\cdot (r_1(T)+1)^d}\right)^2 V_d \cdot (r_1(T)+1)^d \\
&= C\frac{N^2T^2}{(r_1(T)+1)^d}.
\end{split}
\end{equation*}
Then by the definition of $\mathcal{E}_T$, we have
\begin{equation} \label{eq:est-p-less}
q_T^{(<)} \le \frac{1}{Z_T}\sup_{\omega\in A^{(<)}_{T,r_1(T)}}\mathcal{E}_T(\omega) 
       \le \frac{1}{Z_T}\exp\left(-\beta C\frac{N^2T^2}{(r_1(T)+1)^d}\right).
\end{equation}

Now we turn to \eqref{eq:p2-est}.  In order to bound $q_T^{(>)}$,
we use the following large deviations result.  
\begin{lemma} \label{lemma-ber} 
Given $p\in(0,1)$, let $(X_i)_{i\geq 1}$ be a sequence of independent
Bernoulli random variables with $P(X_i=1)=p$ and $P(X_i=0)=q:=1-p$. 
Define $S_{n}:=\sum_{i=1}^{n}X_i$. For $\alpha\in(p,1)$ we have
\begin{equation*}
P(S_n>\alpha n) \le \left(\frac{(1-\alpha)p}{\alpha q}\right)^{\alpha n}
   \left[q+p\frac{\alpha q}{(1-\alpha)p}\right]^n.  
\end{equation*}
\end{lemma} 
We give a short proof of Lemma \ref{lemma-ber} in the appendix.  
 
Now let $r=r_2(T)$, and define 
\begin{equation*}
X_i=X_i^{(T)}:=\mathbf{1}_{\{\sup_{t\in[0,T]}|B^{(i)}_t|>r_2(T)\}},
\end{equation*}
and $p=p_T:=P(X_i^{(T)}=1)$.  

Assuming that $p_T\in(0,1/2)$ and $\alpha=1/2$, we conclude
\begin{equation} \label{eq:large-dev-half}
\begin{split}
q_T^{(>)}&=\frac{1}{Z_T}P(S_N>N/2) 
  \le \frac{1}{Z_T}\left(\frac{p_T}{1-p_T}\right)^{N/2}
        \left[2(1-p_T)\right]^N  \\
&= \frac{1}{Z_T}2^N[p_T(1-p_T)]^{N/2} 
     \le \frac{1}{Z_T}(4p_T)^{N/2}
\end{split}
\end{equation}
We expect that $p_T$ is close to 0 for large $T$, so not much is lost in the  
final step above. 

The probability that a single Brownian motion exits $\mathbf{B}_{r_2(T)}(0)$ 
by time $T$ is bounded by
\begin{equation} \label{p-n-bnd} 
p_T =  P_T\Big( \sup_{t\in[0,T]}|B^{(k)}_t|>r_2(T)\Big) 
\leq C\frac{1}{r_2(T)}\exp\left(-\frac{r_2(T)^2}{2T}\right), 
\end{equation}
by the reflection principle and standard Brownian estimates.

Continuing, we use \eqref{eq:large-dev-half} and \eqref{p-n-bnd} to get
\begin{equation*} 
q_T^{(>)} \le \frac{1}{Z_T}(4p_T)^{N/2}  
\le \frac{1}{Z_T}\left(\frac{C}{r_2(T)}\right)^{N/2}\exp\left(-\frac{Nr_2(T)^2}{2T}\right).
\end{equation*}
Now assuming that $r_2(T)^2/T>C_0>0$ we can absorb $(C/r_2(T))^{N/2}$ into the 
exponential to get
\begin{equation} \label{eq:est-p-greater}
q_T^{(>)} \le \frac{1}{Z_T}\exp\left(-\frac{CNr_2(T)^2}{T}\right),
\end{equation}
for some constant $C>0$.

From \eqref{rad-def} it follows that on $A^{(>)}_{T,r_2}$, at least 
$[N/2]$ of the Brownian paths exit from $\mathbf{B}_{r_2(T)}(0)$ 
within time $[0,T]$.  

Our next argument is only heuristic, but it allows us to guess a formula for
$R_T$.  Recall that $Q_T$ in \eqref{eq:Q-def} involves the ratio of $P_T$ to $Z_T$.  
We think of $R_T$ as the critical value of $r_1(T)$ and $r_2(T)$ for which both
$q^{(<)}_T,q^{(>)}_T$ are close to $Z_T$.  We believe that if
$r_1(T)\approx r_2(T)\approx R_T$, then $q^{(<)}_T,q^{(>)}_T$ will be close.  
Thus we set $r_1(T)\approx r_2(T)$ and equalize
the powers appearing in \eqref{eq:est-p-less} and \eqref{eq:est-p-greater}.
Following this path, we conclude that (ignoring constants)
\begin{equation*}
\frac{Nr^2_2(T)}{T} \approx \beta  \frac{N^2T^2}{r_1(T)^d}
\end{equation*}
hence setting $r_1(T)\approx r_2(T)\approx R_T$, we get the guess
\begin{equation*}
R_T \approx \beta^{\frac{1}{d+2}} N^{\frac{1}{d+2}} T^{\frac{3}{d+2}}.
\end{equation*}

This leads us to guess that, up to logarithmic factors,
\begin{equation} \label{z-asymp} 
Z_T \approx \beta^{\frac{2}{d+2}} N^{\frac{d+4}{d+2}} T^{\frac{4-d}{d+2}}.
\end{equation}

We describe our results regarding $Z_T$ in the following theorem.

\begin{theorem}  \label{prop-z}  
Let $Z_T$ be defined as in \eqref{eq:Q-def}. Then there exists a constant $C_{\ref{prop-z}  
}>0$ not depending on $N,T$ and $\beta$ such that, 
\begin{itemize}
\item[ \textbf{(i)}] for $d=1$, for all $\beta, T \geq 1$, 
\begin{equation*}
\log Z_T \geq -C_{\ref{prop-z}} \beta^{\frac{2}{3}} N^{\frac{5}{3}} T^{ },
\end{equation*}
\item[ \textbf{(ii)}] for $d=2,3$, for all $\beta \geq 1$ and $ 1\leq T \leq N$, 
\begin{equation*}
\log Z_T \geq- C_{\ref{prop-z}} \beta^{\frac{1}{2}} N^{\frac{3}{2}} T^{\frac{1}{2}} \log(\beta T),
\end{equation*}
\end{itemize} 
\end{theorem}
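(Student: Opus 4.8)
The plan is to bound $Z_T = E^{P_T}[\mathcal{E}_T]$ from below by restricting the expectation to a favorable event on which we can control $\int L_T(x)^2\,dx$ deterministically. The natural choice, suggested by the heuristic in Section \ref{section:introduction}, is the event $G$ that all $N$ Brownian motions stay inside a ball of radius $\rho$, where $\rho$ is of the conjectured order $R_T \approx \beta^{1/(d+2)} N^{1/(d+2)} T^{3/(d+2)}$ (up to the logarithmic factor). Writing $Z_T \geq E^{P_T}[\mathbf{1}_G \mathcal{E}_T] \geq \big(\inf_{\omega \in G} \mathcal{E}_T(\omega)\big)\, P_T(G)$, we get $\log Z_T \geq -\beta \sup_{\omega \in G} \int L_T(x)^2\,dx + \log P_T(G)$. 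On $G$ the occupation measure is supported in $\mathbf{B}_{\rho+1}(0)$ with total mass $NT$, so crudely $\int L_T(x)^2\,dx \leq \|L_T\|_\infty \cdot NT$; one needs a pointwise bound on $L_T(x)$, and here the two regimes diverge. In $d=1$ one can simply bound $L_T(x) \leq$ (total occupation) naively, or better use that each path's local-time-type occupation of a unit ball is controlled, giving the first term of order $\beta N^2 T^2 / \rho$. In $d \geq 2$ the sharper route is to bound $\int L_T(x)^2 dx = \sum_{j,k}\int \mathbf{1}_{\mathbf{B}_1(x)}(B^{(j)}_s)\mathbf{1}_{\mathbf{B}_1(x)}(B^{(k)}_t)\,dx\,ds\,dt$ and either (a) use a uniform modulus-of-continuity / occupation bound on $G$, or (b) — and this is probably cleaner — define $G$ as a tube event plus a ``spreading'' condition, e.g. break $[0,T]$ into blocks and require the paths to be in disjoint sub-regions, so that the double sum is dominated by the diagonal-in-space contribution. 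I expect option (b), engineering the conditioning event so that $\int L_T^2$ is automatically $O(\beta^{2/(d+2)} N^{(d+4)/(d+2)} T^{(4-d)/(d+2)})$, to be where most of the work lies.

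For the probabilistic cost $\log P_T(G)$, the key input is a small-ball / tube estimate: the probability that a single $d$-dimensional Brownian motion stays in $\mathbf{B}_\rho(0)$ up to time $T$ is $\exp(-c T/\rho^2)$ (principal Dirichlet eigenvalue of the ball scaled by $\rho$), so by independence $\log P_T(\text{all paths in } \mathbf{B}_\rho) \gtrsim -c N T/\rho^2$. With $\rho \approx \beta^{1/(d+2)} N^{1/(d+2)} T^{3/(d+2)}$ this is $-c\beta^{-2/(d+2)} N^{(d)/(d+2)} T^{(d-4)/(d+2)}$... which for $d=2,3$ is actually subdominant to the energy term once one matches exponents, confirming the heuristic that the two costs balance at this $\rho$. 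The logarithmic factor in the statement enters precisely to buy a bit of slack: taking $\rho$ a factor $(\log(\beta T))^{1/2}$ larger than the bare guess makes the energy term $\beta \int L_T^2$ smaller by a $\log$ while inflating the tube cost by only a $\log$, and one checks the resulting bound is $-C\beta^{1/2} N^{3/2} T^{1/2}\log(\beta T)$ in $d=2,3$ and $-C\beta^{2/3} N^{5/3} T$ in $d=1$ (where the exponents $2/(d+2)=2/3$, $(d+4)/(d+2)=5/3$, $(4-d)/(d+2)=1$ fall out directly). The constraint $T \leq N$ for $d=2,3$ should be exactly what is needed to ensure $\rho \geq 1$ (so that the ``$+1$'' in $\mathbf{B}_{\rho+1}$ is harmless) and to keep the tube-cost exponent from dominating; I would track where $T \leq N$ is used and expect it to appear in comparing $NT/\rho^2$ against the energy term.

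Concretely, the steps in order: (1) fix $\rho = \rho_d(\beta,N,T)$ equal to the conjectured radius times $(\log(\beta T))^{1/2}$, and verify $\rho \geq 1$ using $\beta \geq 1$, $T \geq 1$, and (for $d=2,3$) $T \leq N$; (2) define the conditioning event $G$ (tube of radius $\rho$, plus any spreading structure needed to control the off-diagonal double sum in $\int L_T^2$); (3) prove the deterministic energy bound $\sup_{\omega \in G}\int_{\mathbb{R}^d} L_T(x)^2\,dx \leq C\, N^2 T^2/\rho^d$ in $d=1$ and the improved $C\,\beta^{2/(d+2)}N^{(d+4)/(d+2)}T^{(4-d)/(d+2)}$-type bound in $d=2,3$ via the structure of $G$; (4) prove the lower bound $\log P_T(G) \geq -C N T/\rho^2$ using the Dirichlet-eigenvalue tube estimate for one path raised to the $N$-th power (and absorbing any loss from the spreading condition, which should cost at most another constant-per-path factor, hence $-CN$, negligible); (5) combine and plug in $\rho$ to read off the three displayed bounds. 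The main obstacle is step (3)/(2): getting the $d=2,3$ energy bound down to the conjectured order rather than the naive $N^2T^2/\rho^d$ requires that the paths do not all pile up in the same unit ball, so the conditioning event must be rich enough to force spatial separation while still having tube probability no worse than $\exp(-CNT/\rho^2)$. If forcing genuine separation proves too costly, a fallback is to accept the weaker naive energy bound $\beta N^2 T^2/\rho^d$ in all dimensions and check that, after re-optimizing $\rho$, it still yields the stated $Z_T$ lower bounds — plausibly it does not in $d=2$, which is why the structured event is likely unavoidable there.
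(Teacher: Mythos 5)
Your approach is genuinely different from the paper's, and it has a real gap at the crucial step. The paper does not condition on a good event. Instead it performs a Girsanov change of measure: each Brownian motion is given a time-dependent radial drift of magnitude $\kappa(\alpha+1)t^{\alpha}$ with direction $\theta_k$ chosen i.i.d.\ uniformly on the sphere, and then Jensen's inequality gives
\begin{equation*}
\log Z_T \ge -\beta\,E^{\mathbb{P}_T^{\lambda}}\!\left[\int_{\mathbb{R}^d}L_T(y)^2\,dy\right]
 - E^{\mathbb{P}_T^{\lambda}}\!\left[\log\frac{dP^{\lambda}_T}{dP_T}\right].
\end{equation*}
The second term (a relative entropy) is computed exactly in \eqref{eq:log-RN} and matches the target; the whole difficulty is pushed into estimating $E^{\mathbb{P}^{\lambda}_T}[\int L_T^2]$, which is then done by hand in Sections \ref{sec-cross-2-3}--\ref{sec-self-2-3} by splitting into self- and cross-intersection contributions, integrating the Gaussian transition density of $B^{(k)}_t - B^{(\ell)}_s$ under the tilted measure, and integrating over the angle $\theta$ between drift directions. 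The ``spreading'' you correctly sense is needed is implemented there not as a conditioning event but as the random drift-direction ensemble: the $\theta$-integral of $\exp(-C|D^{(k)}_t||D^{(\ell)}_s|\theta^2/(t+s))$ (see \eqref{id-use}, \eqref{eq:est-int-exponential}) is exactly what suppresses close approaches between different branches.

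The genuine gap in your proposal is that steps (2) and (3) — constructing the event $G$ and proving $\sup_{\omega\in G}\int L_T^2 \lesssim \beta^{2/(d+2)}N^{(d+4)/(d+2)}T^{(4-d)/(d+2)}$ — are precisely the hard part, and you leave them open (you yourself flag that the fallback ``plausibly does not'' work in $d=2$). Your bound $Z_T \ge (\inf_G \mathcal{E}_T)\,P_T(G)$ forces you to control the \emph{supremum} of $\int L_T^2$ over $G$, which a bare tube event does not control at all: paths can pile up in a unit ball near the origin while remaining in the tube, giving $\int L_T^2 \sim (NT)^2$. Making the deterministic sup-bound hold requires adding spatial-separation constraints to $G$ whose probability cost you then have to bound from below; no concrete construction or estimate is given, and it is not clear the separation constraint can be made strong enough to control the worst case while remaining probabilistically cheap. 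The paper avoids this entirely by using an expectation bound rather than a sup bound: Jensen replaces $\sup_G$ with $E^{\mathbb{P}^\lambda}$, and one then estimates an average over pairs $(t,s)$ and angles $\theta$, never needing a pathwise $L^\infty$ occupation bound. If you want to pursue the conditioning route, the cleanest repair is likely to abandon $\inf_G\mathcal{E}_T$ and instead use a second Jensen step, $\log Z_T \ge \log P_T(G) - \beta\,E^{P_T}[\int L_T^2 \mid G]$, and estimate the conditional expectation — at which point you are essentially doing the same computation the paper does, just with a conditioned rather than a tilted measure, and the tilted measure is technically far more tractable because its density and the law of $B^{(k)}_t - B^{(\ell)}_s$ are explicit Gaussians.

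Two smaller points. First, your tube probability exponent and the claim that it is subdominant are correct, and the place where $T\le N$ is used in the paper is indeed to absorb the weaker of the two competing terms (see \eqref{eq:T-bound-1}), so your guess about its role is in the right spirit but not quite the precise mechanism. Second, the statement you are proving contains \emph{no} logarithmic slack in $\rho$; the $\log(\beta T)$ in Theorem \ref{prop-z}(ii) is produced by the estimate of the self-intersection term ($J_1$ in the paper's notation, Lemma \ref{lem-j1-1}), not by a deliberate enlargement of the tube radius.
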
 
The proof of Theorem \ref{prop-z} is given in Section \ref{sec-results}.

\section{Proofs of Theorems \ref{thm-radius} and \ref{prop-z} } \label{sec-results} 
 \begin{proof} [Proof of Theorem \ref{prop-z}]
In order to derive the bounds in Theorem \ref{prop-z}, we use a change of 
measure that will impose a time-dependent radial drift 
$(\lambda_k(t))_{t\ge0}$ on each Brownian motion 
$(B^{(k)}_t)_{t\ge0}$, of magnitude
\begin{equation} \label{drift-def}
|\lambda_k(t)| = \kappa(\alpha+1)t^\alpha,
\end{equation}
for some $\kappa>0$ and $\alpha<0$ to be determined.  Now we describe the 
directions $(\theta_k)_{k\in\{1,\dots,N\}}$ of the drifts, where each 
$\theta_k$ is a point on the unit sphere $\mathbf{S}_d\subset\mathbb{R}^d$.  
Let $\theta:=(\theta_k)_{k\in\{1,\dots,N\}}$ be an ensemble of i.i.d. 
random points chosen according to the uniform probability measure on 
$\mathbf{S}_d$.
Specifically, given $T,N$, we define $(\theta_k)_{k\in\{1,\dots,N\}}$ over a 
probability space $(\Omega_\theta,\mathcal{F}_\theta,P_\theta)$ and form the 
product space
\begin{equation*}
(\ol{\Omega},\mathcal{G},\mathbb{P})
= (\Omega_\theta\times\Omega,\mathcal{F}_\theta\times\mathcal{F}
        ,P_\theta\times P_T).
\end{equation*}
Sometimes we write $\mathbb{P}_{T}$ to emphasize the dependence on $T$.
Note that there is also an implicit dependence on $N$.
Finally, we denote $\lambda:=(\lambda_k)_{k\in\{1,\dots,N\}}$.

%\red{Specifically, we expand the probability space $\Omega$ defined after
%\eqref{eq:Bish-result} to a product space $\ol \Omega $ such for any 
%$(\omega_1,\omega_2) \in \ol \Omega$ such that $\omega_1$ is governed by 
%$P_T$ and $\omega_2$ is governed by $P_\theta$ is the uniform probability 
%measure on $\mathbf{S}_d$.}

Roughly speaking, we want the drift at time $t$ to be stronger than the 
standard deviation $t^{1/2}$ of Brownian motion at time $t$.  From 
\eqref{drift-def} it follows that the accumulated drift up to time $t$ is 
given by  
\begin{equation} \label{eq:drift}
\kappa (\alpha+1) \int_{0}^{t} s^{\alpha}ds =\kappa t^{\alpha+1}, 
\end{equation}
so we require that $\alpha>-1/2$.  

Given $\theta$ and $N,T$, we denote the measure 
on canonical path space $(C[0,T])^N$, which is induced by the drifts 
$\lambda$, as $P^\lambda_T$.  The Radon-Nikodym derivative with respect to 
$P_T$ is given by
\begin{equation*}
\frac{dP^\lambda_T}{d P_T  } 
 = \exp\left(\sum_{k=1}^{N}\left\{\int_{0}^{T}\lambda_k(t)\cdot dB^{(k)}_t
         -\frac{1}{2}\int_{0}^{T}|\lambda_k(t)|^2dt\right\}\right).
\end{equation*}
We can also define the corresponding product probability 
$\mathbb{P}^\lambda_T$ as before, using $P^\lambda_T$ instead of $P_T$.

We can express $Z_T$ in \eqref{eq:Q-def} in terms of $\mathcal{E}_T$ and the Radon-Nikodym 
derivative as follows
\begin{equation*}
Z_T = E^{\mathbb{P}_T^\lambda}\left[\mathcal{E}_T
         \left(\frac{dP^\lambda_T}{dP_T}\right)^{-1}\right].
\end{equation*}
We can use Jensen's inequality on $\log Z_T$, since the logarithm function 
is concave, to get   
\begin{equation} \label{z-split} 
\begin{split}
\log Z_T 
&\ge E^{\mathbb{P}_T^\lambda}\left[\log\mathcal{E}_T
           -\log\left(\frac{dP^\lambda_T}{dP_T} \right)\right] \\
&\ge -\beta E^{\mathbb{P}_T^\lambda}\left[\int_{\mathbb{R}^d}L_T(y)^2dy\right]
      -E^{\mathbb{P}_T^\lambda}\left[\log\left(\frac{dP^\lambda_T}{dP_T} \right)\right] \\
&=: -\beta I_{1}(d,\beta, N,T)- I_{2}(d,\beta, N, T).
\end{split}
\end{equation}

Using \eqref{drift-def} we can easily compute 
\begin{equation} \label{eq:log-RN}
\begin{split}
 I_2(d,\beta, N, T)
&= E^{\mathbb{P}_T^\lambda}\left[\sum_{k=1}^{N}\left\{\int_{0}^{T}\lambda_k(t)\cdot dB^{(k)}_t
         -\frac{1}{2}\int_{0}^{T}|\lambda_k(t)|^2dt\right\}\right]  \\
&= -\frac{N}{2}\kappa^2(\alpha+1)^2E\left[\int_{0}^{T}t^{2\alpha}dt\right]  \\
&= -\frac{\kappa^2(\alpha+1)^2}{2}N\cdot\frac{T^{2\alpha+1}}{2\alpha+1}.
\end{split}
\end{equation}
 
We can compare $I_2(d,\beta, N, T)$ in \eqref{eq:log-RN} with \eqref{z-asymp} in order to determine the constants in the drift. 
Neglecting multiplicative constants it follows that we must have 
\begin{equation*}
\kappa^2N\cdot\frac{T^{2\alpha+1}}{2\alpha+1}= \beta^{\frac{2}{d+2}} N^{\frac{d+4}{d+2}} T^{\frac{4-d}{d+2}}. 
\end{equation*}
This will lead to the following choice of drift parameters:
\begin{equation}  \label{old-drift} 
\kappa = \beta^{\frac{1}{d+2}}  N^{\frac{1}{d+2}} , \quad \alpha= - \frac{d-1}{d+2}, \quad d=1,2,3. 
\end{equation} 

\begin{remark} \label{rem-drift} 
While it is tempting to use the drift parameters in \eqref{old-drift} for $d=1,2,3$, we 
find that in the case of $d=3$ it is suboptimal. This choice of drift for 
$d=3$ yields the following bound on $Z_T$.  
\begin{equation}  \label{old-z-3} 
\log Z_T \geq  -C_{\ref{prop-z}}\beta^{\frac{3}{5}} N^{\frac{8}{5}} T^{\frac{4}{5}} \log(\beta T). 
\end{equation} 
Since \eqref{old-z-3} is suboptimal we do not prove it, but the interested 
reader can verify it by modifying the arguments in Sections 
\ref{sec-local-2-3}, \ref{sec-cross-2-3}, and \ref{sec-self-2-3}.

It is better to use the same drift parameter for $d=3$ as for $d=2$, namely
$\kappa = \beta^{\frac{1}{4}}  N^{\frac{1}{4}}$, $\alpha =-1/4$. 
This choice gives the bound in Theorem \ref{prop-z}(ii), that is for $d=3$,
\begin{equation*}
\log Z_T \geq- C_{\ref{prop-z}} \beta^{\frac{1}{2}} N^{\frac{3}{2}} T^{\frac{1}{2}} \log(\beta T).
\end{equation*}
%which is clearly better than \eqref{old-z-3} for $\beta,N,T \geq 1$. The 
%reason for that is that the bound on $Z_T$ depends on a comparison 
%between the two components on the right hand side of  \eqref{z-split}: 
%$I_{1}(d,\beta, N,T)$ which involves the occupation measure $L_T$, and on 
%$I_{2}(d,\beta, N,T)$ which contains the Radon-Nikodym derivative and is 
%computed in \eqref{eq:log-RN}. The bound on $I_{1}(d,\beta, N,T)$ in Proposition 
%\ref{prop-i-2} below, is obtained by bounding the terms in right-hand side 
%of \eqref{eq:L-squared}: $J_{1}(d,\beta,N,T)$ which depends on self-intersection 
%occupation measure of each branch of the polymer and $J_{2}(d,\beta,N,T)$ which 
%involves cross-intersection occupation measure of pairs of branches. The 
%bound on $J_{1}(d,\beta,N,T)$, which is derived on Section \ref{sec-self-2-3}, 
%is the more restrictive one and eventually determines the result of 
%Proposition \ref{prop-i-2}. In particular in the proof of Lemma 
%\ref{lem-j1-1} (see \eqref{b-bnd1}) we show that this bound should be 
%similar to $d=2,3$. Hence choosing similar drifts for the cases of $d=2,3$ 
%leads to an optimal bound for our method.  
\end{remark} 

For the reminder of the paper, we therefore fix 
  \begin{equation}\label{kap-al23} 
\kappa = \begin{cases} 
  \beta^{\frac{1}{3}}  N^{\frac{1}{3}}, & d=1, \\
  \beta^{\frac{1}{4}}  N^{\frac{1}{4}}, & d=2,3,\\
  \end{cases} 
   \qquad  \alpha  = \begin{cases} 0, & \  d=1, \\
   - \frac{1}{4}, &\  d=2,3. 
   \end{cases} 
\end{equation}

The rest of this paper is dedicated to the estimation of  $I_1(d,\beta, N,T)$. 
This is essentially given in the following proposition which together with 
\eqref{z-split} and \eqref{eq:log-RN} concludes the proof of Theorem 
\ref{prop-z}. 
\begin{prop} \label{prop-i-2} 
There exists a constant $C_{\ref{prop-i-2} }>0$ such that 

not depending on $N,T$ and $\beta$ such that, 
\begin{itemize}
\item[ \textbf{(i)}] for $d=1$, for all $N, \beta, T\geq 1$,  
\begin{equation*}
I_{1}(1,\beta, N,T) \leq C_{\ref{prop-i-2} }\beta^{-\frac{1}{3}} N^{\frac{5}{3}} T.
\end{equation*}
\item[ \textbf{(ii)}] for $d=2,3$, for all $\beta \geq 1$ and $ 2\leq T \leq N$, 
\begin{equation*}
I_{1}(d,\beta, N,T) \leq C_{\ref{prop-i-2} }\beta^{-\frac{1}{2}} N^{\frac{3}{2}} T^{\frac{1}{2}} \log(\beta T).
\end{equation*}
\end{itemize} 
\end{prop}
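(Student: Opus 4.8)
The plan is to bound $I_1(d,\beta,N,T)=E^{\mathbb{P}^\lambda_T}\left[\int_{\mathbb{R}^d}L_T(y)^2\,dy\right]$ by analyzing the drifted Brownian motions. Under $\mathbb{P}^\lambda_T$, each $B^{(k)}_t$ is a Brownian motion plus an accumulated radial drift of magnitude $\kappa t^{\alpha+1}$ in the direction $\theta_k$; for $d=2,3$ this puts the particle near radius $\kappa t^{3/4}$ at time $t$, so by time $T$ the bulk of the occupation measure sits in a shell of radius $\sim \kappa T^{3/4}$. First I would expand $\int L_T(y)^2\,dy$ into diagonal and off-diagonal contributions,
\begin{equation*}
\int_{\mathbb{R}^d}L_T(y)^2\,dy = \sum_{k=1}^{N}\int_{\mathbb{R}^d}L^{(k)}_T(y)^2\,dy + \sum_{j\neq k}\int_{\mathbb{R}^d}L^{(j)}_T(y)L^{(k)}_T(y)\,dy,
\end{equation*}
where $L^{(k)}_T(y)=|\{t\le T: B^{(k)}_t\in\mathbf{B}_1(y)\}|$. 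The diagonal ``self'' terms are handled by a one-body estimate on a single drifted path (this is the content of Section \ref{sec-self-2-3}), while the off-diagonal ``crossing'' terms require controlling the expected overlap of two independent drifted paths with independent uniform directions $\theta_j,\theta_k$ (Section \ref{sec-cross-2-3}). A preliminary ``local'' estimate (Section \ref{sec-local-2-3}) on $\int L^{(k)}_T(y)^2\,dy$ over short time windows feeds into both.

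For the crossing terms, the key point is that two particles with independent uniformly random directions separate: at time $t$ their expected squared distance is of order $\kappa^2 t^{3/2}$ (for $d=2,3$), so they are rarely within distance $1$ of each other once $t$ is not too small. I would write $E^{\mathbb{P}^\lambda_T}\left[\int L^{(j)}_T L^{(k)}_T\,dy\right]=\int_0^T\int_0^T P(|B^{(j)}_s-B^{(k)}_t|\le 2)\,ds\,dt$ up to constants, and bound the probability using the Gaussian density of the difference $B^{(j)}_s-B^{(k)}_t$, whose mean has magnitude comparable to $\kappa(s^{3/4}\theta_j - t^{3/4}\theta_k)$ and whose variance is $s+t$; averaging over $\theta_j,\theta_k$ and integrating in $s,t$ should give a bound of the right order $\beta^{-1/2}N^{3/2}T^{1/2}\log(\beta T)$ after multiplying by $N^2$ and dividing appropriately (recall $\kappa=\beta^{1/4}N^{1/4}$, so $N^2/\kappa^d$ produces the stated exponents in $d=2$; in $d=3$ the shell geometry gives a weaker but still sufficient bound since $T\le N$). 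The constraint $T\le N$ and $T\ge 2$ enter precisely to keep these integrals and the logarithmic factor under control, and the $\log(\beta T)$ arises from the time interval where $t$ is too small for the drift to dominate the diffusion, forcing a crude bound on a logarithmically small piece.

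For the self terms, a single drifted path traverses an annulus of thickness growing like $t^{1/2}$ around radius $\kappa t^{3/4}$; the occupation density in a unit ball along the path should be $O(1)$ on average away from the origin because the path is moving outward at speed exceeding its diffusive spread, so $\int L^{(k)}_T(y)^2\,dy \lesssim T$ up to logarithmic corrections near $t=0$. Summing over $k$ gives $N T$, which is dominated by the crossing bound when $N$ is large (again using $T\le N$). I would assemble these pieces: $\beta I_1 \le \beta\cdot C(\beta^{-1/2}N^{3/2}T^{1/2}\log(\beta T) + NT)$, and check the first term dominates in the relevant regime, matching Theorem \ref{prop-z}(ii) via \eqref{z-split} and \eqref{eq:log-RN}. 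The $d=1$ case \textbf{(i)} is simpler: with $\alpha=0$ the drift is linear, $\kappa t$, so the $N$ paths spread ballistically along $N$ independent signs $\pm$, effectively $N/2$ in each direction, and a direct computation of the expected squared local time of a Brownian motion with constant drift $\kappa$ on $[0,T]$ gives $\int L_T(y)^2\,dy \sim N^2 T/\kappa$ from the overlap of same-sign paths in the interval $[0,\kappa T]$, yielding $\beta^{-1/3}N^{5/3}T$ after substituting $\kappa=\beta^{1/3}N^{1/3}$.

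The main obstacle will be the off-diagonal crossing estimate in $d=2$: one must integrate the two-time Gaussian collision probability against the uniform angular measures and over the full time square $[0,T]^2$, and tease out exactly one power of $\log(\beta T)$ (not more) while respecting $T\le N$. The small-time corner $s,t \lesssim 1$ (or more precisely where the accumulated drift $\kappa t^{3/4}$ is $O(\sqrt t\,)$, i.e. $t \lesssim \kappa^{-4}=\beta^{-1}N^{-1}$) is where the drift fails to separate the particles and naive bounds give a divergence; controlling this requires either using the trivial bound $P(\cdot)\le 1$ on a small enough region or a more careful local-time estimate, and getting the bookkeeping right so the contribution is absorbed into the logarithm is the delicate step.
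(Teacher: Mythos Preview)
Your overall architecture matches the paper's: expand $I_1$ into self terms $J_1$ (diagonal $k=\ell$) and cross terms $J_2$ ($k\ne\ell$), reduce each to a double time integral of a Gaussian collision probability, and for $J_2$ average over the independent directions $\theta_k,\theta_\ell$. Your outline of the cross-term analysis is essentially what the paper carries out in Section~\ref{sec-cross-2-3}, and your $d=1$ heuristic lands on the right answer (a small difference: the paper sends all $N$ particles in the \emph{same} direction rather than with random signs, but this changes nothing in the estimates).

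The genuine gap is your self-intersection estimate for $d=2,3$. You claim the per-particle quantity $\int L^{(k)}_T(y)^2\,dy$ is $O(T)$ up to logarithms, giving a total self contribution of order $NT$. But $NT$ is \emph{not} dominated by the target $\beta^{-1/2}N^{3/2}T^{1/2}\log(\beta T)$: that would require $\beta T\le N(\log(\beta T))^2$, which fails for large $\beta$ even under the hypothesis $T\le N$. The bound $O(T)$ ignores the drift strength $\kappa=\beta^{1/4}N^{1/4}$; a single drifted path revisits itself far less when $\kappa$ is large, and throwing this away costs you the needed $\beta^{-1/2}$ factor. The paper (Lemmas~\ref{lem-j1-1}--\ref{lem-j1-2}) treats the self term with the same Gaussian-density machinery as the cross term, using that $B^{(k)}_t-B^{(k)}_s$ has mean $\kappa(t^{3/4}-s^{3/4})\mathbf{e}_1$ and variance $t-s$, and obtains a bound of the form $\beta^{-1/2}N^{3/2}T^{1/2}+\beta^{-1/2}N^{1/2}T^{3/2}\log(\beta T)$; the second piece absorbs into the first exactly when $T\le N$. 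Incidentally this is where the $\log(\beta T)$ in the proposition actually originates: the cross-intersection Lemmas~\ref{lem-j-r1}--\ref{lem-j-r3} deliver bounds with no logarithm at all, contrary to your identification of the log with the small-time corner of the crossing integral.
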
 
The proof of Proposition \ref{prop-i-2} for $d=2,3$ is postponed to Section 
\ref{sec-local-2-3} and the case of $d=1$, which is much simpler, is 
postponed to Section \ref{sec-local-1}. 
\end{proof} 
 
\begin{proof}[Proof of Theorem \ref{thm-radius}] 
Note that Theorem \ref{prop-z} provides a lower bound on $Z_T$ of the form 
\begin{equation} \label{z-l-bnd-f} 
\log Z_T \geq - f(d,\beta,N,T), 
\end{equation} 
where
\begin{equation*}
f(d,\beta,N,T)=
\begin{cases}
C_{2.2}\beta^{2/3}N^{5/3}T & \text{if $d=1$},  \\
C_{2.2}\beta^{1/2}N^{3/2}T^{1/2} & \text{if $d=2,3$}.
\end{cases}
\end{equation*}

From \eqref{eq:Q-def}, \eqref{eq:def-A}, \eqref{p-def}, \eqref{eq:est-p-less} and \eqref{z-l-bnd-f} we have 
\begin{equation} 
\begin{aligned} 
\log Q_T\big(A^{(<)}_{T,r_1(T)}\big) &= \log  q_T^{(<)}  - \log Z_T \\
&\leq  -\beta C\frac{N^2T^2}{ (1+r_1(T))^d} + f(d,\beta,N,T), 
\end{aligned} 
\end{equation} 
hence by choosing $r_1(T)$ as in the upper bound in the statement of Theorem 
\ref{thm-radius}, with $c_d$ small enough and  $\beta,N,T$ as in the 
hypothesis, we ensure that $r_1(T) \geq 1$ and therefore, 
\begin{align*} 
 -\beta C\frac{N^2T^2}{(1+r_1(T))^d}  +f(d,\beta,N,T)  
 &\leq  -\beta \hat  C\frac{N^2T^2}{r_1(T)^d} +f(d,\beta,N,T) \\
 &< - c f(d,\beta,N,T), 
\end{align*} 
for some constant $c>0$ and we get the lower bound on $R_T$.  

Similarly from \eqref{eq:Q-def}, \eqref{eq:def-A}, \eqref{p-def}, \eqref{eq:est-p-greater} and \eqref{z-l-bnd-f} we have 
\begin{equation} 
\begin{aligned} 
\log Q_T\big(A^{(>)}_{T,r_2(T)}\big) &= \log  q_T^{(>)}  - \log Z_T \\
&\leq  -\frac{CNr_2(T)^2}{T} + f(d,\beta,N,T), 
\end{aligned} 
\end{equation} 
hence by choosing $r_2(T)$ as in the statement of Theorem \ref{thm-radius} we ensure that   
\begin{equation*}
-\frac{CNr_2(T)^2}{T}   +f(d,\beta,N,T) < - c' f(d,\beta,N,T), 
\end{equation*}
for some constant $c'>0$ and we get the upper bound on $R_T$. Note for $d=1$ 
that $r_1(T)$ and $r_2(T)$ agree up to a constant;  for $d=2$,  $r_1(T)$ and 
$r_2(T)$ agree up to logarithmic terms;  while in $d=3$ there is a gap 
between them. 
\end{proof} 

\section{Proof of Proposition \ref{prop-i-2} for $d=2,3$} \label{sec-local-2-3} 
This section is dedicated to the bound on $I_1(d,\beta, N,T)$ in  Proposition \ref{prop-i-2} for $d=2,3$.

\begin{proof}[Proof of Proposition \ref{prop-i-2} for $d=2,3$] 
 Recall that $L_T$ was defined in \eqref{eq:l-time}. From \eqref{z-split} 
and the statement of Proposition \ref{prop-i-2}, we see that we need the 
bound for $d=2,3$, 
\begin{equation} \label{l-sq-bnd} 
E^{\mathbb{P}_T^\lambda}\left[\int_{\mathbb{R}^d}L_T(y)^2dy\right] \leq \beta^{-\frac{1}{2}} N^{\frac{3}{2}} T^{\frac{1}{2}}. 
\end{equation} 
for $\alpha$ and $\kappa$ as in \eqref{kap-al23}. 

Note that 
\begin{equation*}
\begin{split}
\int_{\mathbb{R}^d}L_T(y)^2dy
&= \int_{\mathbb{R}^d}\left(\sum_{k=1}^{N}\int_{0}^{T}
     \mathbf{1}_{\mathbf{B}_1(y)}(B_t^{(k)})dt\right)^2dy  \\
&= \sum_{k,\ell=1}^{N}\int_{0}^{T}\int_{0}^{T}\left(\int_{\mathbb{R}^d}
    \mathbf{1}_{\mathbf{B}_1(y)}(B_t^{(k)})
       \mathbf{1}_{\mathbf{B}_1(y)}(B_s^{(\ell)}) dy\right)dtds.  
\end{split}
\end{equation*}
In fact, for $a,b\in\mathbb{R}^d$, we have
\begin{equation*}
\int_{\mathbb{R}^d}\mathbf{1}_{\mathbf{B}_1(y)}(a)\mathbf{1}_{\mathbf{B}_1(y)}(b)dy
\le \mathbf{1}_{\mathbf{B}_2(0)}(b-a)|\mathbf{B}_1(0)|
= C_d \mathbf{1}_{\mathbf{B}_2(0)}(b-a).
\end{equation*}

Thus, if $f^{(k,\ell,\alpha)}_{t,s}(z)$ is the probability density function of 
$B^{(k)}_t-B^{(\ell)}_s$ under $\mathbb{P}_T^\lambda$, then using Fubini's Theorem we get, 
\begin{equation} \label{eq:L-squared}
\begin{aligned} 
\int_{\mathbb{R}^d} E^{\mathbb{P}_T^\lambda}\left[L_T(y)^2\right]dy 
&\le C_d \sum_{k,\ell=1}^{N} \iint_{[0,T]^2}\int_{\mathbf{B}_2(0)} f^{(k,\ell,\alpha)}_{t,s}(z) dz dsdt \\ 
& = C_d \sum_{k=1}^{N} \iint_{[0,T]^2}\int_{\mathbf{B}_2(0)} f^{(k,k,\alpha)}_{t,s}(z) dz dsdt \\ 
& \quad + C_d \sum_{k \not =\ell} \iint_{[0,T]^2}\int_{\mathbf{B}_2(0)} f^{(k,\ell,\alpha)}_{t,s}(z) dz dsdt \\ 
& =:J_{1}(d,\beta,N,T)+J_{2}(d,\beta,N,T). 
\end{aligned} 
\end{equation}
Note that $J_1(d,\beta,N,T)$ represents the sum of mean-squared 
self-intersection occupation measure of each branch of the star polymer. 
$J_2(d,\beta,N,T)$ represents the sum over all pairs of branches of their 
mean-squared cross-intersection occupation measure. 
The following proposition gives some essential bounds on each of these terms. 
\begin{prop} \label{prop-j} 
For $d=2,3$ there exists a constant $C>0$ 
such that for all $1\leq T \leq N$, $N\in\mathbb{Z}^+$ and $\beta  \geq 1$ the following bound holds:
\begin{equation*}
J_{i}(d,\beta, N,T) \leq C \beta^{-\frac{1}{2}} N^{\frac{3}{2}} T^{\frac{1}{2}} \log(\beta T), \quad   i=1,2. 
\end{equation*}
\end{prop}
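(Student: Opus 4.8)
The plan is to estimate $J_1$ and $J_2$ separately, in each case rewriting the integral over $\mathbf{B}_2(0)$ as the probability that a Gaussian vector lands in $\mathbf{B}_2(0)$ and then bounding a double integral over the pair of times.

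\emph{The self-term.} By the Girsanov change of measure defining $\mathbb{P}^\lambda_T$ together with \eqref{eq:drift} and \eqref{kap-al23}, under $\mathbb{P}^\lambda_T$ each $B^{(k)}$ is a standard Brownian motion plus the deterministic drift $t\mapsto\kappa t^{3/4}\theta_k$. Hence for $s<t$ the increment $B^{(k)}_t-B^{(k)}_s$ is Gaussian with covariance $(t-s)I_d$ and mean of norm $m:=\kappa(t^{3/4}-s^{3/4})$; in particular $\int_{\mathbf{B}_2(0)}f^{(k,k,\alpha)}_{t,s}(z)\,dz$ does not depend on $\theta_k$, so $J_1=C_dN\iint_{[0,T]^2}\mathbb{P}^\lambda_T(|B_t-B_s|<2)\,ds\,dt$ for a single drifted Brownian motion $B$. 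I would bound the probability by the uniform Gaussian density estimate
\begin{equation*}
\mathbb{P}^\lambda_T(|B_t-B_s|<2)\le\min\Big(1,\frac{C_d}{(t-s)^{d/2}}\Big)\exp\Big(-\frac{((m-2)_+)^2}{2(t-s)}\Big),
\end{equation*}
use the elementary inequality $t^{3/4}-s^{3/4}\ge\tfrac34 t^{-1/4}(t-s)$ (so the drift pushes apart any two times at rate at least $\tfrac34\kappa t^{-1/4}$), and change variables to $u=t-s$. Two structural facts coming from $\beta\ge1$ and $T\le N$ keep the bookkeeping uniform in the parameters: $\kappa^2t^{-1/2}\ge\kappa^2T^{-1/2}=\beta^{1/2}N^{1/2}T^{-1/2}\ge\beta^{1/2}\ge1$, so the exponential decay rate in $u$ is at least $1$; and $\kappa^{-1}t^{1/4}\le\kappa^{-1}T^{1/4}=\beta^{-1/4}(T/N)^{1/4}\le1$, so the range of $u$ on which $m$ has not yet reached the ball radius $2$ (and there is no exponential gain) is an interval of length $\asymp\kappa^{-1}t^{1/4}\le1$. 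Splitting the inner $u$-integral at $m\sim2$ — bounding $\min(1,C_du^{-d/2})\le1$ on the short no-gain interval and using $\int_1^\infty u^{-d/2}e^{-au}\,du$ on the remainder (the source of the single $\log(\beta T)$ factor when $d=2$) — and then integrating in $t\in[0,T]$ should produce a bound of the stated form for $J_1$.

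\emph{The cross-term.} For $k\ne\ell$ the vector $B^{(k)}_t-B^{(\ell)}_s$ is, conditionally on $(\theta_k,\theta_\ell)$, Gaussian with covariance $(t+s)I_d$ (two independent Brownian increments) and mean $\kappa(t^{3/4}\theta_k-s^{3/4}\theta_\ell)$ of norm $m$, so $J_2=C_dN(N-1)\iint_{[0,T]^2}E^{P_\theta}\big[\mathbb{P}^\lambda_T(|B^{(1)}_t-B^{(2)}_s|<2\mid\theta)\big]\,ds\,dt$. The new ingredient is the average over the two i.i.d.\ uniform directions: writing $X:=\theta_k\cdot\theta_\ell$ one has $m^2=\kappa^2\big((t^{3/4}-s^{3/4})^2+2t^{3/4}s^{3/4}(1-X)\big)$, and $X$ has the explicit density proportional to $(1-x^2)^{(d-3)/2}$ on $[-1,1]$; hence $P_\theta(m\le R)\asymp\big(R/(\kappa(t^{3/4}\vee s^{3/4}))\big)^{d-1}$ for moderate $R$ — two independently directed paths stay far apart unless the directions are within $\asymp t^{-1/4}$ of each other — so the angular average supplies an extra factor $\asymp(\kappa\,t^{3/4})^{-(d-1)}$ beyond the naive self-type estimate. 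Feeding this in and running the same splitting of the double integral (now with $u=t+s$) gives the bound for $J_2$; the added angular decay makes the cross-term no larger than the self-term and the logarithm is easily absorbed.

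\emph{The main obstacle} is the $J_2$ estimate: one must follow the meeting probability's joint dependence on the pair $(t,s)$ and on $X=\theta_k\cdot\theta_\ell$ through every relevant regime — small times where the Brownian spread still dominates the drift, intermediate times, $m$ just below versus just above the ball radius $2$, and the behaviour of the $X$-density near $1$ — and then verify that, after integration, all the exponents recombine to exactly $\beta^{-1/2}N^{3/2}T^{1/2}$ with only a single logarithmic loss. A secondary technical nuisance is to carry out the change of variables and the ``short no-gain interval'' estimate so that every constant is genuinely independent of $(N,T,\beta)$, using solely $\beta\ge1$ and $T\le N$.
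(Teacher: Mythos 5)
Your sketch for $J_2$ is broadly in line with the paper's route — express the cross-term through the Gaussian density of $B^{(k)}_t-B^{(\ell)}_s$, average over the random directions using the law of cosines and $\int_0^\infty e^{-K\theta^2}\,d\theta=\sqrt{\pi}/(2\sqrt K)$, then split the $(s,t)$-square into regions — so the real concern is $J_1$, where there is a genuine gap both in how the $\log(\beta T)$ appears and in whether the near-diagonal contribution is small enough. Your plan cuts the $u=t-s$ integral at $u_0\asymp\kappa^{-1}t^{1/4}$, the point where the mean separation $m=\kappa(t^{3/4}-s^{3/4})$ reaches the ball radius, and bounds the integrand by $1$ below this cut. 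The resulting near-diagonal contribution is
\begin{equation*}
N\int_0^T u_0(t)\,dt\;\asymp\;N\kappa^{-1}T^{5/4}\;=\;\beta^{-1/4}N^{3/4}T^{5/4}\;=\;\beta^{-1/4}(T/N)^{3/4}\cdot N^{3/2}T^{1/2},
\end{equation*}
and this is not dominated by $\beta^{-1/2}N^{3/2}T^{1/2}\log(\beta T)$ uniformly over the stated range: taking $T=N$ you would need $\beta^{1/4}\lesssim\log(\beta N)$, which fails once $\beta$ is large. In addition, your claimed source of the logarithm — "using $\int_1^\infty u^{-d/2}e^{-au}\,du$ on the remainder" — is inconsistent with the very observation you made that the decay rate satisfies $a\asymp\kappa^2t^{-1/2}\ge1$: with $a\ge1$, $\int_1^\infty u^{-1}e^{-au}\,du=O(e^{-a})$ carries no log at all. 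So as written the proposal neither produces the stated logarithm nor controls the no-gain strip.

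The paper does something structurally different here. It does not cut at the natural radius-crossing point $u_0$; it introduces a threshold with the log built in, $\gamma:=c\,(N\beta)^{-1/2}T^{1/2}\log(\beta T)$, and splits at $t-s=\gamma$. On $\{t-s<\gamma\}$ it bounds the integrand by $1$, and the area $T\gamma$ times $N$ (using $T\le N$) is $\lesssim\beta^{-1/2}N^{3/2}T^{1/2}\log(\beta T)$ — this is where the $\log$ actually comes from. On $\{t-s>\gamma\}$ it weakens $t^{-1/2}$ to $T^{-1/2}$ in the decay rate and observes that $T^{-1/2}\zeta^2\gamma\asymp\log(\beta T)$, so the exponential $\exp(-cT^{-1/2}\zeta^2\gamma)$ becomes a negative power of $\beta T$ that overwhelms the $T^2$ volume. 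The key idea your proposal is missing is precisely this \emph{choice} of a log-scaled cut-point $\gamma$ (rather than the geometric cut-point $u_0$) together with the crude bound by $1$ below it; without it, the near-diagonal strip is not controlled and the logarithm has no source.
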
 
The proof of Proposition \ref{prop-j} is long and involved.  In Section 
\ref{sec-cross-2-3} we prove the bound on cross-intersection occupation measure, which involves two different Brownian motions $B^{(k_i)}$ for $i=1,2$ and $k_1\ne k_2$,
. The bound on self-intersection occupation measure 
is derived in Section \ref{sec-self-2-3}. From \eqref{l-sq-bnd}, \eqref{eq:L-squared} and 
Proposition \ref{prop-j} we get the bounds in Proposition \ref{prop-i-2}. 
\end{proof} 

\section{Cross-intersections occupation measure for $d=2,3$} \label{sec-cross-2-3}
In this section we derive the bounds on $J_{2}(d,\beta, N,T)$ from Proposition \ref{prop-j} for $d=2,3$. 
We recall that $f^{(k,\ell,\alpha)}_{t,s}(z)$ is the probability density function of 
$B^{(k)}_t-B^{(\ell)}_s$ under $\mathbb{P}_T^\lambda$. In the following we fix $k$ and $\ell$ and let $\theta$ be the angle between the drifts of $B^{(k)}$ and $B^{(\ell)}$, which were sampled according to the procedure described after \eqref{drift-def}. 

Our next task is to estimate $f^{(k,\ell,\alpha)}_{t,s}(z)$.  Since $f$ only 
depends on the angle $\theta$, from now on we write 
$f^{\theta,\alpha}_{t,s}$ for our probability density.  

Instead of working with $P^\lambda_T$, we will work with $P_T$ and consider 
processes $B^{(k)}_t+D^{(k)}_t,B^{(\ell)}_s+D^{\ell}_s$, where 
$D^{(k)},D^{(\ell)}$ are the respective drift processes. First note that
\begin{equation*}
B^{(k)}_t-B^{(\ell)}_s \sim \mathcal N(0,t+s),
\end{equation*}
where $\mathcal N(\cdot,\cdot)$ stands for the $d$-dimensional normal distribution.

Recall that the drift magnitudes are given by \eqref{eq:drift},
namely
\begin{align} \label{drift} 
|D^{(k)}_t|=\kappa t^{\alpha+1},  && |D^{(\ell)}_s|=\kappa s^{\alpha+1}.
\end{align}

For the remainder of the proof we assume that $\theta\in[0,\pi]$.  
First we get a lower bound on $|z-D^{(k)}_t+D^{(\ell)}_s|^2$.
Let $a=D^{(k)}_t-D^{(\ell)}_s$. Then
\begin{equation*}
\begin{split}
|a|^2 = |(a-z)+z|^2 \le 2|z-a|^2+2|z|^2 \le 2|z-a|^2+8,
\end{split}
\end{equation*}
since $|z|\le2$ in the domain of integration of $J_2(d,\beta, N,T)$ (see \eqref{eq:L-squared}). 
Now we use the law of cosines to deduce
\begin{equation} \label{d-r-1} 
\begin{split}
|D^{(k)}_t-D^{(\ell)}_s|^2 
&=|D^{(k)}_t|^2+|D^{(\ell)}_s|^2-2|D^{(k)}_t|\cdot|D^{(\ell)}_s|\cos\theta \\
&=\left(|D^{(k)}_t|-|D^{(\ell)}_s|\right)^2
       +2|D^{(k)}_t|\cdot|D^{(\ell)}_s|\left[1-\cos\theta\right].
\end{split}
\end{equation}
Elementary geometry shows that for some constant $C>0$ and for all
$\theta \in [0,\pi]$, 
\begin{equation} \label{cosine-th} 
1-\cos\theta \ge C\theta^2.
\end{equation}
In fact we could choose $C=2/\pi^2$.  Therefore,
\begin{equation} \label{eq:law-cosines}
|D^{(k)}_t-D^{(\ell)}_s|^2 
\ge \left(|D^{(k)}_t|-|D^{(\ell)}_s|\right)^2
       +C|D^{(k)}_t|\cdot|D^{(\ell)}_s|\theta^2.
\end{equation}

We also recall the following fact.  
For any $K>0$ we have
\begin{equation} \label{eq:est-int-exponential}
\int_{0}^{\infty}\exp\left(-K\theta^2\right)d\theta
= \frac{\sqrt{\pi}}{2\sqrt{K}}. 
\end{equation} 
%\begin{remark} 
%The bound in \eqref{eq:est-int-exponential} will be used in Lemma \ref{lem-j-r1} (see \eqref{id-use}) and in Lemma 
%\ref{lem-j-r3} (see \eqref{eq:bound-int-theta}) in order to bound 
%$J_2(d,\beta,N,T)$ for two different cases. 
%Note that the aforementioned bounds can be improved for $d=3$ by using the 
%following equality for any $K>0$, 
%\begin{equation*}  
%\int_{0}^{\infty}\exp\left(-K\theta^2\right) \theta d\theta
%= \frac{1}{2K}, 
%\end{equation*}
%for the expression for $J_2(d,\beta,N,T)$ in \eqref{j-2-theta} which has an 
%extra factor of $\theta$ (see explanation after  \eqref{j-2-theta}). As mentioned in Remark \ref{rem-drift} this will 
%not have any affect on the result of Theorem \ref{prop-z}. Therefore, for 
%the sake of simplicity we provide one proof for the bounds $J_2(d,\beta, N,T)$ 
%from Proposition \ref{prop-j} for $d=2,3$ using 
%\eqref{eq:est-int-exponential}. 
%\end{remark}  

We derive a preliminary bound for $J_{2}(d,\beta,N,T)$ which appears in
\eqref{eq:L-squared}, for $d\in\{2,3\}$:
\begin{equation}\label{re0}
\begin{aligned} 
J_{2}(d,\beta,N,T) 
& \leq C N^2\int_{0}^{\pi}\iint_{0\le s\le t\le T}\frac{1}{(t+s)^{d/2}}\\
& \qquad \times \int_{\mathbf{B}_2(0)}
  \exp\left(-\frac{\big|z-D_t^{(k)}+D_s^{(\ell)}\big|^2}{2(t+s)}\right)
        dz dsdt d\theta.
        \end{aligned}
\end{equation}
%\textbf{Case 1: $d=2$.} 
%Note that $J_{2}(2,\beta,N,T)$ is bounded by 
%\begin{equation}\label{re0}
%\begin{aligned} 
%J_{2}(2,\beta,N,T) 
%& \leq C N^2\int_{0}^{\pi}\iint_{0\le s\le t\le T}\frac{1}{t+s}\\
%& \qquad \times \int_{\mathbf{B}_2(0)}
%  \exp\left(-\frac{\big|z-D_t^{(k)}+D_s^{(\ell)}\big|^2}{2(t+s)}\right)
%        dz dsdt d\theta.
%        \end{aligned}
%\end{equation}
%
%\textbf{Case 2: $d=3$.}  
%By the same argument as in Case 1 we get,  
%\begin{equation} \label{j-2-theta} 
%\begin{aligned} 
%J_{2}(3,\beta,N,T) &\leq CN^2\int_{0}^{\pi}\iint_{0\le s\le t\le T}\frac{1}{(t+s)^{3/2}} \\
%& \qquad \times\int_{\mathbf{B}_2(0)}
%  \exp\left(-\frac{\big|z-D_t^{(k)}+D_s^{(\ell)}\big|^2}{2(t+s)}\right)
%        dz dsdt d\theta. 
%        \end{aligned}
%\end{equation}

Note that for $d=3$ in the integral above, using polar coordinates would 
give $\theta d\theta$.  The interested reader can check that using $d\theta$ 
gives the same end result.  Of course we can bound $\theta$ by $\pi$ and so 
replace $\theta d\theta$ by $d\theta$.

Let $V$ be the subspace generated by the first two coordinates of 
$\mathbb{R}^3$, so for $z=(z_1,z_2,z_3)$ we write $z_V = (z_1,z_2)$. In 
fact we can find a coordinate system in which $D_t^{(k)}$ is parallel to 
$z_1$, so the above integral is bounded by 
\begin{equation} \label{re1} 
\begin{aligned} 
&J_{2}(3,\beta,N,T)\\
 &\leq CN^2\int_{0}^{\pi}\iint_{0\le s\le t\le T}\frac{1}{(t+s)^{}}\int_{ {\mathbf{B}}^V_2(0)}
  \exp\left(-\frac{\big|  z_V-D_t^{(k)}+D_s^{(\ell)}\big|^2}{2(t+s)}\right)
        dz_V  \\
        &\qquad\qquad \qquad \quad \qquad \qquad \times \frac{1}{(t+s)^{1/2}}  \int_{-\infty}^{\infty}  \exp\left(-\frac{z_3^2}{2(t+s)}\right) dz_3 dsdt d\theta \\
        &\leq CN^2\int_{0}^{\pi}\iint_{0\le s\le t\le T}\frac{1}{(t+s)^{}} \\
        & \qquad \qquad   \quad \qquad \qquad \times \int_{ {\mathbf{B}}^V_2(0)}
  \exp\left(-\frac{\big|z_V-D_t^{(k)}+D_s^{(\ell)}\big|^2}{2(t+s)}\right)
        dz_V  dsdt d\theta,
\end{aligned} 
\end{equation}
where $ {\mathbf{B}}^V_2(0)$ is the projection of ${\mathbf{B}}_2(0)$ to $V$. 
From \eqref {re0} and \eqref {re1} it follows that we need to bound the same 
integral for the cases where $d=2,3$ but $D_t^{(k)},D_s^{(\ell)}$ will 
change according to the dimension as implied by \eqref{kap-al23} and 
\eqref{drift}. In the following we therefore work on to the combined cases 
$d=2,3$.  

 In order to do that we distinguish between a few cases which depend on the range of $(t,s)$ in the above integral. 

Define 
\begin{equation} \label{R-def} 
\begin{aligned} 
\mathbf{R}_1&=\left \{(s,t): 0\le s\le t\le T, t\ge4 \right\}, \\ 
\mathbf{R}_2&=\left\{(s,t): 0\le s\le t\le T, \, t < 4, \,   |D_t^{(k)}-D_s^{(\ell)}|^2  \leq 8\right \},\\ 
\mathbf{R}_3&=\left\{(s,t): 0\le s\le t\le T, \, t < 4, \, |D_t^{(k)}-D_s^{(\ell)}|^2 > 8 \right\}. 
\end{aligned} 
\end{equation}
Moreover for $i=1,2,3,$ and $d=2,3$ define: 
\begin{equation} \label{bar-J} 
\begin{aligned} 
\ol J (d,\beta,N,T,\mathbf{R}_i) &= C N^2\int_{0}^{\pi}\iint_{\mathbf{R}_i}\frac{1}{t+s}\\
&  \times \int_{\mathbf{B}_2(0)}
  \exp\left(-\frac{\big|z-D_t^{(k)}+D_s^{(\ell)}\big|^2}{2(t+s)}\right)
        dz dsdt d\theta.
\end{aligned} 
\end{equation}
We now present a sequence of technical lemmas that will help us to bound 
$J_2(d,\beta,N,T)$. The proof of Proposition \ref{prop-j} for $J_2(d,\beta, N,T)$ 
is given in the end of this section. 
\begin{lemma} \label{lem-j-r1} 
For $d=2,3$, there exists a constant 
$C_{\ref{lem-j-r1}}>0$ such that for all  $\beta, T \geq 1$ and $N \in \mathbb{Z}^{+}$ the following bound holds: 
\begin{equation*}
\ol J (d,\beta,N,T,\mathbf{R}_1)  \leq C_{\ref{lem-j-r1}} \beta^{-1/2} N^{3/2} T^{1/2}.
\end{equation*}
\end{lemma}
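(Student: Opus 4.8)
The plan is to bound $\ol J(d,\beta,N,T,\mathbf R_1)$ by first carrying out the innermost integral over $z\in\mathbf B_2(0)$, which is dominated by $\int_{\mathbb R^d}\exp\bigl(-|z-D_t^{(k)}+D_s^{(\ell)}|^2/(2(t+s))\bigr)\,dz = (2\pi(t+s))^{d/2}$. This loses nothing essential because the Gaussian is integrated against Lebesgue measure on all of $\mathbb R^d$ (the ball constraint only helped earlier in reducing $d=3$ to the two transverse coordinates). After this step the $z$-integral and one power of $(t+s)^{-d/2}$ combine, and in the reduced-dimension formulation \eqref{bar-J} with the single factor $1/(t+s)$ we are left with $\ol J(d,\beta,N,T,\mathbf R_1)\le CN^2\int_0^\pi\iint_{\mathbf R_1}(t+s)^{-1}\exp\bigl(-|D_t^{(k)}-D_s^{(\ell)}|^2/(2(t+s))\bigr)\,ds\,dt\,d\theta$ (the $z$-shift is harmless after completing the square, up to an absorbed constant since $|z|\le 2$).

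Next I would insert the law-of-cosines bound \eqref{eq:law-cosines}, namely $|D_t^{(k)}-D_s^{(\ell)}|^2\ge (|D_t^{(k)}|-|D_s^{(\ell)}|)^2 + C|D_t^{(k)}|\,|D_s^{(\ell)}|\,\theta^2$, and perform the $\theta$-integral using \eqref{eq:est-int-exponential}: since $|D_t^{(k)}| = \kappa t^{3/4}$ and $|D_s^{(\ell)}| = \kappa s^{3/4}$ on $\mathbf R_1$ (recall $\alpha=-1/4$ so $\alpha+1=3/4$), integrating $\exp(-C\kappa^2 t^{3/4}s^{3/4}\theta^2/(2(t+s)))$ over $\theta\in(0,\infty)$ gives a factor $C(t+s)^{1/2}\kappa^{-1}t^{-3/8}s^{-3/8}$. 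This cancels one power of $(t+s)^{1/2}$ and contributes the crucial $\kappa^{-1}=\beta^{-1/4}N^{-1/4}$, so that the remaining bound is $CN^2\kappa^{-1}\iint_{\mathbf R_1}(t+s)^{-1/2}t^{-3/8}s^{-3/8}\exp\bigl(-(|D_t^{(k)}|-|D_s^{(\ell)}|)^2/(2(t+s))\bigr)\,ds\,dt$.

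The final step is the double time integral. On the region $\mathbf R_1$ where $t\ge 4$ and $s\le t$, so $t+s\asymp t$, I would drop the surviving Gaussian factor (bounded by $1$) for the gross bound and estimate $\iint_{0\le s\le t\le T, t\ge 4}t^{-1/2}t^{-3/8}s^{-3/8}\,ds\,dt$; the $s$-integral gives $Cs^{5/8}\big|_0^t=Ct^{5/8}$, and then $\int_4^T t^{-7/8+5/8}\,dt = \int_4^T t^{-1/4}\,dt \le CT^{3/4}$. Multiplying out: $CN^2\kappa^{-1}T^{3/4} = CN^2\beta^{-1/4}N^{-1/4}T^{3/4} = C\beta^{-1/4}N^{7/4}T^{3/4}$, which is larger than the claimed $\beta^{-1/2}N^{3/2}T^{1/2}$, so simply discarding the Gaussian is too lossy. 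The hard part will therefore be to retain just enough of $\exp\bigl(-(|D_t^{(k)}|-|D_s^{(\ell)}|)^2/(2(t+s))\bigr)$: when $s$ is much smaller than $t$ the difference $|D_t^{(k)}|-|D_s^{(\ell)}|\asymp \kappa t^{3/4}$ forces $(|D_t^{(k)}|-|D_s^{(\ell)}|)^2/(t+s)\asymp \kappa^2 t^{3/2}/t = \kappa^2 t^{1/2}$, giving genuine decay $\exp(-c\kappa^2 t^{1/2})$ that controls the $t$-integral and introduces the missing powers of $\beta^{-1/2}N^{-1/2}T^{-1/4}$ once one substitutes $\kappa^2 = \beta^{1/2}N^{1/2}$ and integrates $t^{-1/4}\exp(-c\kappa^2 t^{1/2})$; conversely when $s\asymp t$ one changes variables to $u=t-s$ or uses $|t^{3/4}-s^{3/4}|\ge c\,t^{-1/4}|t-s|$ and again extracts decay in $u/(t+s)^{1/2}\cdot \kappa t^{-1/4}$. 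Carefully splitting $\mathbf R_1$ into these two sub-regimes and tracking constants is where all the work lies; I expect the restriction $T\le N$ (equivalently $\kappa^2 T^{1/2}\ge T$, i.e.\ $\beta^{1/2}N^{1/2}\ge T^{1/2}$) to be exactly what makes the exponential decay win over the polynomial growth.
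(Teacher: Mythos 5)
Your setup (law-of-cosines lower bound on $|D_t^{(k)}-D_s^{(\ell)}|^2$, followed by the $\theta$-integral via the Gaussian identity \eqref{eq:est-int-exponential}, producing the factor $\kappa^{-1}(t+s)^{1/2}t^{-3/8}s^{-3/8}$) is exactly the paper's path, and your diagnosis that discarding the surviving Gaussian factor is too lossy is correct. However, the proposal stops precisely at the step that carries all the difficulty. After your $\theta$-integral you are left with $CN^2\kappa^{-1}\iint_{\mathbf R_1}(t+s)^{-1/2}t^{-3/8}s^{-3/8}\exp\bigl(-\kappa^2(t^{3/4}-s^{3/4})^2/(2(t+s))\bigr)\,ds\,dt$, and you only sketch in words that one should split into $s\ll t$ and $s\asymp t$ regimes. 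The paper actually executes this: it rescales $u=s/T$, $v=t/T$ to reduce to the claim \eqref{eq:need-integral} that $\int_0^1\int_0^v v^{-7/8}u^{-3/8}\exp\bigl(-CKv^{-1}(v^{3/4}-u^{3/4})^2\bigr)\,du\,dv \le K^{-1/2}$ uniformly over $K\ge 1$, then uses the mean-value bound $v^{3/4}-u^{3/4}\ge c\,v^{-1/4}(v-u)$, the substitution $w=v-u$, and a split of the inner integral at $w=v/2$ to obtain the two contributions $H_1$ and $H_2$, each $\lesssim K^{-1/2}$. None of that is done in your proposal. As written, it is a plan, not a proof.

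Two additional points. First, your opening paragraph asserts that the $z$-integral is ``dominated by $(2\pi(t+s))^{d/2}$''; if you really integrated the Gaussian over all of $\mathbb R^d$ the shift would vanish by translation invariance and you would lose the exponential factor you go on to use. What the paper actually does is use $|z|\le 2$ together with the inequality $|a|^2\le 2|z-a|^2+8$ to pull $z$ out of the exponent at the cost of a constant (and then $4/(s+t)\le 1$ on $\mathbf R_1$ because $s+t\ge 4$); your parenthetical remark at the end of the paragraph gestures at this, but the $(2\pi(t+s))^{d/2}$ statement is misleading. Second, your closing guess that the restriction $T\le N$ ``is exactly what makes the exponential decay win'' is incorrect for this lemma: Lemma~\ref{lem-j-r1} holds for all $\beta,T\ge 1$ and $N\in\mathbb Z^+$, with no upper bound on $T$; the only thing needed is $K=\beta^{1/2}N^{1/2}T^{1/2}\ge 1$, which is automatic from the stated hypotheses. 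The constraint $T\le N$ enters only in the self-intersection estimate (Lemma~\ref{lem-j1-1}), not here.
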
 

\begin{proof} 
Using \eqref{eq:law-cosines} we get
\begin{equation*}
|z-D_t^{(k)}+D_s^{(\ell)}|^2 
\ge   \left(|D^{(k)}_t|-|D^{(\ell)}_s|\right)^2  
       +C|D^{(k)}_t|\cdot|D^{(\ell)}_s|\theta^2  - 4,
\end{equation*}
since $|z|\le2$ in the domain of integration of $\ol J (d,\beta,N,T,\mathbf{R}_i)$ (see \eqref{bar-J}). 

Note that since on $\mathbf{R}_1$ we have $s+t>4$, it follows that $4/(s+t)\le1$ and by integrating over $z$ we get  
\begin{equation*}
\begin{split}
&\ol J (d,\beta,N,T,\mathbf{R}_1) \\
&=N^2\int_{0}^{\pi}\iint_{(s,t)\in\mathbf{R}_1}\frac{1}{t+s}\int_{\mathbf{B}_2(0)}
  \exp\left(-\frac{\big|z-D_t^{(k)}+D_s^{(\ell)}\big|^2}{2(t+s)}\right)
        dz dsdt d\theta  \\
&\le CN^2\int_{0}^{\pi}\iint_{(s,t)\in\mathbf{R}_1}\frac{1}{t+s}  \\
&\hspace{1cm}  \times\exp\left(-\frac{\left(|D^{(k)}_t|-|D^{(\ell)}_s|\right)^2
       +C|D^{(k)}_t|\cdot|D^{(\ell)}_s|\theta^2}
       {2(t+s)} + 1\right) dsdt d\theta.
\end{split}
\end{equation*}
Next we factor the above exponential, incorporate $e^1$ into the constant, 
and apply our elementary integral \eqref{eq:est-int-exponential} to obtain
\begin{equation} \label{id-use} 
\begin{split}
&\ol J (d,\beta,N,T,\mathbf{R}_1)\\
&\le 
CN^2\iint_{(s,t)\in\mathbf{R}_1}\frac{1}{t+s}
\exp\left(-\frac{\left(|D^{(k)}_t|-|D^{(\ell)}_s|\right)^2}{2(t+s)}\right) \\
&\qquad\qquad \qquad\qquad\qquad\qquad \times \left[\int_{0}^{\pi}
    \exp\left(-\frac{C|D^{(k)}_t|\cdot|D^{(\ell)}_s|\theta^2}{2(t+s)}\right)
      d\theta\right]  dsdt  \\
&=
CN^2\iint_{(s,t)\in\mathbf{R}_1}\frac{1}{\sqrt{(t+s)|D^{(k)}_t|\cdot|D^{(\ell)}_s|}} \\
&\qquad\qquad \qquad\qquad\qquad\qquad \times \exp\left(-\frac{\left(|D^{(k)}_t|-|D^{(\ell)}_s|\right)^2}{2(t+s)}\right) dsdt.
\end{split}
\end{equation}
Then we use the fact that $0\le s\le t$, and so $|D_s^{(\ell)}|\le|D_t^{(k)}|$. From \eqref{kap-al23} and \eqref{drift} we have 
$|D^{(k)}_t|=\beta^{1/4}N^{1/4}t^{3/4}$, so we get   

\begin{equation*}
\begin{split}
&\ol J (d,\beta,N,T,\mathbf{R}_1)\\
&\le \tilde CN^2\iint_{(s,t)\in\mathbf{R}_1}t^{-\frac{1}{2}}\beta^{-\frac{1}{4}}N^{-\frac{1}{4}}
 t^{-\frac{3}{8}}s^{-\frac{3}{8}}  \\
 &\qquad \qquad  \qquad \qquad \times \exp\left(-C\frac{\beta^{\frac{1}{2}}N^{\frac{1}{2}}[t^{\frac{3}{4}}-s^{\frac{3}{4}}]^2}{t}\right) dsdt\\
&\le \tilde C\beta^{-\frac{1}{4}}N^{\frac{7}{4}}\int_{0}^{T}\int_{0}^{t}t^{-\frac{7}{8}}s^{-\frac{3}{8}} \\
 &\qquad \qquad  \qquad \qquad \times
    \exp\left(-C\beta^{\frac{1}{2}}N^{\frac{1}{2}}t^{-1}[t^{\frac{3}{4}}-s^{\frac{3}{4}}]^2\right) dsdt. 
\end{split}
\end{equation*}

Now by making a change variables to $u=s/T$ and $v=t/T$ it follows that,  
\begin{equation}\label{eq-rd} 
\begin{split}
\ol J (d,\beta,N,T, & \mathbf{R}_1)
  =\tilde C\beta^{-\frac{1}{4}}N^{\frac{7}{4}}\int_{0}^{1}\int_{0}^{v}(Tv)^{-\frac{7}{8}}(Tu)^{-\frac{3}{8}} \\
& \times\exp\left(-C\beta^{\frac{1}{2}}N^{\frac{1}{2}}(Tv)^{-1}
           [(Tv)^{\frac{3}{4}}-(Tu)^{\frac{3}{4}}]^2\right) d(Tu)d(Tv)  \\
&\hspace{-2cm} 
  = \tilde C\beta^{-\frac{1}{4}}N^{\frac{7}{4}}T^{\frac{3}{4}}\int_{0}^{1}\int_{0}^{v}v^{-\frac{7}{8}}u^{-\frac{3}{8}} 
\exp\left(-C\beta^{\frac{1}{4}}N^{\frac{1}{4}}T^{\frac{1}{ 2}}v^{-1}
           \big[v^{\frac{3}{4}}-u^{\frac{3}{4}}\big]^2\right) dudv. 
\end{split}
\end{equation}
We will show that the following bound holds for all $K\geq 1$,  
\begin{equation} \label{eq:need-integral}
\begin{split} 
\int_{0}^{1}\int_{0}^{v}v^{-\frac{7}{8}}u^{-\frac{3}{8}}  \exp\left(-CK v^{-1}
           [v^{\frac{3}{4}}-u^{\frac{3}{4}}]^2\right) dudv 
\leq K^{-1/2}. 
\end{split}
\end{equation}
We choose $K=\beta^{1/2}N^{1/2}T^{1/2}$ as in \eqref{eq-rd} so the bound in \eqref{eq:need-integral} will give us  
\begin{equation*}
\ol J (d,\beta,N,T,\mathbf{R}_1) \leq C\beta^{-1}N^{}, 
\end{equation*}
for $\beta, T \geq 1$, and this will complete the proof for $d=2$ and $d=3$. 

 By the mean value theorem, there exists $r\in(u,v)$ such that
\begin{equation*}
v^{\frac{3}{4}}-u^{\frac{3}{4}} = cr^{-\frac{1}{4}}(v-u) \ge cv^{-\frac{1}{4}}(v-u).
\end{equation*}
Hence following \eqref{eq-rd}, it is enough to bound
\begin{equation} \label{int-t}
\begin{split}
&\int_{0}^{1}\int_{0}^{v}v^{-\frac{7}{8}}u^{-\frac{3}{8}}\exp\left(-Kv^{-\frac{3}{2}}[v-u]^2\right) dudv \\
&= \int_{0}^{1}\int_{0}^{v}v^{-\frac{7}{8}}(v-w)^{-\frac{3}{8}}\exp\left(-Kv^{-\frac{3}{2}}w^2\right) dwdv.
\end{split}
\end{equation}
We examine the inner integral in \eqref{int-t}, and write
\begin{equation} \label{k-comp-1} 
\begin{split}
&\int_{0}^{v}v^{-\frac{7}{8}}(v-w)^{-\frac{3}{8}}\exp\left(-Kv^{-\frac{3}{2}}w^2\right) dw \\
&= \int_{0}^{v/2}v^{-\frac{7}{8}}(v-w)^{-\frac{3}{8}}\exp\left(-Kv^{-\frac{3}{2}}w^2\right) dw \\
&\quad  + \int_{v/2}^{v}v^{-\frac{7}{8}}(v-w)^{-\frac{3}{8}}\exp\left(-Kv^{-\frac{3}{2}}w^2\right) dw
  \\
&=: H_1(v)+ H_2(v).
\end{split}
\end{equation}

First dealing $H_1$, we have
\begin{equation*}
\begin{split}
H_1(v) &= \int_{0}^{v/2} (v-w)^{-\frac{3}{8}}\exp\left(-Kv^{-\frac{3}{2}}w^2\right)dw  \\
&\le Cv^{-\frac{3}{8}}\int_{0}^{\infty}\exp\left(-Kv^{-\frac{3}{2}}w^2\right)dw \\
&\le Cv^{-\frac{3}{8}}K^{-1/2}v^{\frac{3}{4}}  \\
&\le Cv^{\frac{3 }{8}}K^{-1/2},
\end{split}
\end{equation*}
using \eqref{eq:est-int-exponential}. Integrating over $v$ as well, we get  
\begin{equation} \label{k-comp0} 
\begin{split}
\int_{0}^{1}v^{-\frac{7}{8}}H_1(v)dv &= CK^{-1/2}\int_{0}^{1}v^{-\frac{7}{8}}v^{\frac{3 }{8}}dv \\
&= CK^{-1/2}.
\end{split}
\end{equation}

Turning to $H_2$, we have 
\begin{equation*}
\begin{split}
H_2(v) &= \int_{v/2}^{v}(v-w)^{-\frac{3}{8}}\exp\left(-Kv^{-\frac{3}{2}}w^2\right)dw  \\
&\le \exp\left(-CKv^{-\frac{3}{2}}v^2\right)\int_{v/2}^{v}(v-w)^{-\frac{3}{8}}dw  \\
&\le Cv^{\frac{5}{8}}\exp\left(-CKv^{\frac{1}{2}}\right).
\end{split}
\end{equation*}
Integrating over $v$ as well, we get
\begin{equation} \label{k-comp} 
\begin{split}
\int_{0}^{1}v^{-\frac{7}{8}}H_2(v)dv 
&\le \int_{0}^{1}v^{-\frac{7}{8}}v^{\frac{5}{8}}\exp\left(-CKv^{\frac{1}{2}}\right)dv  \\
&= \int_{0}^{1}v^{-\frac{1}{4}} \exp\left(-CKv^{\frac{1}{2}}\right)dv  \\
&= K^{-\frac{3}{2}}\int_{0}^{1} (K^{2}v)^{-\frac{1}{4}}  \\
& \qquad \qquad\quad \times\exp\left(-C(K^{2}v)^{\frac{1}{2}}\right)d(K^{2}v)  \\
&= K^{ -\frac{3}{2}}\int_{0}^{1} r^{-\frac{1}{4} }\exp\left(-Cr^{\frac{1}{2}}\right)dr  \\
&\leq K^{-3/2}, \quad \textrm{for all } K\geq 1. 
\end{split}
\end{equation}
Note that we have used 
\begin{equation*}
\int_0^1 x^{-\eta/2} e^{-x^\eta} dx < \infty \quad \textrm{for all } \eta \in (0,2). 
\end{equation*}
From \eqref{int-t}, \eqref{k-comp-1}, \eqref{k-comp0} and \eqref{k-comp} we 
get \eqref{eq:need-integral} and Lemma \ref{lem-j-r1} follows. 
 \end{proof} 

The following technical lemma gives us some essential bounds on 
$\ol J (d,\beta,N,T,\mathbf{R}_2)$ which was defined in \eqref{bar-J}.  

\begin{lemma} \label{lem-j-r2} For $d=2,3$, there exists a constant $C_{\ref{lem-j-r2} }>0$ such that for all $\beta  \geq 1, \, T> 0, \, N\in\mathbb{Z}^+$ the 
following bound holds: 
\begin{equation*}
\ol J (d,\beta,N,T,\mathbf{R}_2)  \leq C_{\ref{lem-j-r2} } \beta^{-1/2} N^{3/2}.  
\end{equation*}
\end{lemma}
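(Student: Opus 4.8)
The plan is to use the fact that on $\mathbf{R}_2$ the time variables satisfy $0\le s\le t<4$ and the drift separation satisfies $|D^{(k)}_t-D^{(\ell)}_s|^2\le 8$, so that once the apparent singularity $1/(t+s)$ is disposed of, the whole estimate collapses to measuring a small region in $(s,t,\theta)$-space. First I would bound the inner $z$-integral: in the two-dimensional form of $\ol J$ coming from \eqref{re0}--\eqref{re1} (for $d=3$ one has already projected onto the subspace $V$), the Gaussian has variance $t+s$ in $\mathbb{R}^2$, so by translation invariance of the full-space integral,
\begin{equation*}
\frac{1}{t+s}\int_{\mathbf{B}_2(0)}\exp\!\left(-\frac{|z-D^{(k)}_t+D^{(\ell)}_s|^2}{2(t+s)}\right)dz
\ \le\ \frac{1}{t+s}\int_{\mathbb{R}^2}\exp\!\left(-\frac{|z|^2}{2(t+s)}\right)dz
\ =\ \frac{2\pi(t+s)}{t+s}\ =\ 2\pi .
\end{equation*}
The point here is that there is no genuine $1/(t+s)$ singularity near $s=t=0$: the Gaussian mass supplies the compensating factor $t+s$. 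Consequently, writing $\mathcal R$ for the set of $(s,t,\theta)$ over which one integrates in $\ol J(d,\beta,N,T,\mathbf{R}_2)$ — that is, $0\le s\le t$, $t<4$, $\theta\in[0,\pi]$ and $|D^{(k)}_t-D^{(\ell)}_s|^2\le 8$ — we obtain $\ol J(d,\beta,N,T,\mathbf{R}_2)\le CN^2|\mathcal R|$, where $|\mathcal R|$ is Lebesgue measure.

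Next I would extract from $|D^{(k)}_t-D^{(\ell)}_s|^2\le 8$ the two inequalities it encodes. By \eqref{eq:law-cosines},
\begin{equation*}
8\ \ge\ \big(|D^{(k)}_t|-|D^{(\ell)}_s|\big)^2+C\,|D^{(k)}_t|\,|D^{(\ell)}_s|\,\theta^2 ,
\end{equation*}
and since $|D^{(k)}_t|=\beta^{1/4}N^{1/4}t^{3/4}$ and $|D^{(\ell)}_s|=\beta^{1/4}N^{1/4}s^{3/4}$ by \eqref{kap-al23} and \eqref{drift}, this forces simultaneously
\begin{equation*}
t^{3/4}-s^{3/4}\ \le\ C\beta^{-1/4}N^{-1/4}
\qquad\text{and}\qquad
\theta\ \le\ C\beta^{-1/4}N^{-1/4}(ts)^{-3/8} .
\end{equation*}
Integrating out $\theta$ over $\big[0,\min\big(\pi,\,C\beta^{-1/4}N^{-1/4}(ts)^{-3/8}\big)\big]$ and bounding this interval length by $C\beta^{-1/4}N^{-1/4}(ts)^{-3/8}$ regardless of whether it is cut off at $\pi$, we get
\begin{equation*}
\ol J(d,\beta,N,T,\mathbf{R}_2)\ \le\ C\beta^{-1/4}N^{7/4}
\iint_{\{0\le s\le t<4,\ t^{3/4}-s^{3/4}\le C\beta^{-1/4}N^{-1/4}\}}(ts)^{-3/8}\,ds\,dt .
\end{equation*}

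Finally I would evaluate the remaining double integral via the substitution $a=t^{3/4}$, $b=s^{3/4}$, under which $(ts)^{-3/8}\,ds\,dt=\frac{16}{9}\,a^{-1/6}b^{-1/6}\,da\,db$ and the domain becomes $\{0\le b\le a<4^{3/4},\ a-b\le\delta\}$ with $\delta:=C\beta^{-1/4}N^{-1/4}$. Because $\beta,N\ge 1$ the range of $a$ is bounded, and splitting the inner integral over $b$ at $b=2\delta$ one checks that $\int_{(a-\delta)\vee 0}^{a}b^{-1/6}\,db\le C\delta\,a^{-1/6}$ holds uniformly in $a$, so the double integral is at most $C\delta\int_0^{4^{3/4}}a^{-1/3}\,da=C\delta$. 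Substituting $\delta=C\beta^{-1/4}N^{-1/4}$ back yields
\begin{equation*}
\ol J(d,\beta,N,T,\mathbf{R}_2)\ \le\ C\beta^{-1/4}N^{7/4}\cdot\beta^{-1/4}N^{-1/4}\ =\ C\beta^{-1/2}N^{3/2},
\end{equation*}
which is the desired bound.

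The argument is essentially bookkeeping of exponents; the only places that need attention are (i) recognizing that the $1/(t+s)$ weight is harmless once combined with the Gaussian mass, (ii) estimating the $\theta$-integral and the $(s,t)$-integral jointly, since the admissible range of $\theta$ depends on $ts$, and (iii) the uniform bound on the inner $b$-integral after the change of variables, which requires treating $a\lesssim\delta$ and $a\gtrsim\delta$ separately. I expect the main — and quite mild — difficulty to be confirming that these powers combine to give precisely $\beta^{-1/2}N^{3/2}$ with no extraneous logarithm, in contrast to the more delicate region $\mathbf{R}_3$.
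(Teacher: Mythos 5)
Your proposal is correct and follows essentially the same approach as the paper: you bound the $z$-integral by a constant, use $|D^{(k)}_t - D^{(\ell)}_s|^2 \le 8$ (via \eqref{eq:law-cosines}) to constrain both $\theta$ and $t^{3/4}-s^{3/4}$, and integrate the measure of the resulting small region. The only cosmetic difference is that you linearize the $(s,t)$-constraint with the substitution $a=t^{3/4}$, $b=s^{3/4}$, whereas the paper estimates $t^{1/4}-(t^{3/4}-8^{1/2}\beta^{-1/4}N^{-1/4})^{1/3}$ directly and splits the $t$-integral at $t\gtrless 4\beta^{-1/3}N^{-1/3}$; both routes yield the same $\beta^{-1/2}N^{3/2}$.
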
 

\begin{proof} 
Note that on $\mathbf{R}_2$ since $|z|^2 <4$ we we expect 
$|z-D_t^{(0)}-D_s^{(\ell)}|^2$ to be of the same order of $s+t$. In this 
case we get that the integral over $z$ in the right-hand side of 
\eqref{bar-J} is approximately 1, that is,
\begin{equation*}
\int_{\mathbf{B}_2(0)}
 \frac{1}{s+t}\exp\left(-C\frac{|z-D_t^{(k)}+D_s^{(\ell)}|^2}{2(s+t)}\right) dz
\approx 1.
\end{equation*}
Hence we will absorb this integral as a multiplicative factor in our bounds on $\ol J (d,\beta,N,T,\mathbf{R}_2)$ as follows, 
\begin{equation}  \label{nn-1} 
\begin{aligned} 
\ol J (d,\beta,N,T,\mathbf{R}_2) &= C N^2\int_{0}^{\pi}\iint_{\mathbf{R}_2}\frac{1}{t+s}\\
&\qquad\qquad  \times \int_{\mathbf{B}_2(0)}
  \exp\left(-\frac{\big|z-D_t^{(k)}+D_s^{(\ell)}\big|^2}{2(t+s)}\right) 
        dz dsdt d\theta \\
        &\leq  C N^2\int_{0}^{\pi}\iint_{\mathbf{R}_2} dsdt d\theta. 
\end{aligned} 
\end{equation}
 
Now we flesh out this heuristic discussion.  
From \eqref{kap-al23} and \eqref{drift} we get 
\begin{equation} \label{eq:diff-D}
|D_t^{(k)}-D_s^{(\ell)}|^2
= \beta^{\frac{1}{2} }N^{\frac{1}{2}}\left(t^{\frac{3}{4}}-s^{\frac{3}{4}}\cos\theta\right)^2
+ \beta^{\frac{1}{2}}N^{\frac{1}{2}}\left(s^{\frac{3}{4}}\sin\theta\right)^2.
\end{equation}
Suppose we are on the region $\mathbf{R}_2$.  
Then $|D_t^{(k)}-D_s^{(\ell)}|^2 \leq 8$, and the terms on the 
right side of \eqref{eq:diff-D}, which are both positive, must each be
bounded by 8.  Therefore
$\beta^{1/2}N^{1/2}\left(s^{3/4}\sin\theta\right)^2\le 8$ and so using 
$\sin\theta \geq C\theta$ for $\theta \in [0 ,\pi]$ we have  
\begin{equation} \label{const1} 
\theta \le C s^{-\frac{3}{4}}\beta^{-\frac{1}{4}}N^{-\frac{1}{4}}.
\end{equation}
From \eqref{eq:diff-D} we also have  
\begin{equation} \label{eq:less8}
\beta^{\frac{1}{2}}N^{\frac{1}{2}}\left(t^{\frac{3}{4}}-s^{\frac{3}{4}}\cos\theta\right)^2 \le 8.
\end{equation}
Solving \eqref{eq:less8} for $s$, we get
\begin{equation*}
s\ge \frac{1}{\cos\theta}\left(t^{3/4}-8^{1/2}\beta^{-1/4}N^{-1/4}\right)^{4/3}
     \ge \left(t^{3/4}-8^{1/2}\beta^{-1/4}N^{-1/4}\right)^{4/3}
\end{equation*}
Therefore if $t^{3/4}>8^{1/2}\beta^{-\frac{1}{4}}N^{-\frac{1}{4}}$, or 
equivalently $t>4\beta^{-\frac{1}{3}}N^{-\frac{1}{3}}$ we get 
\begin{equation} \label{y1} 
0<\left(t^{\frac{3}{4}}-8^{\frac{1}{2}}N^{-\frac{1}{4}}\beta^{-\frac{1}{4}}\right)^{\frac{4}{3}}
< s \leq t.
\end{equation}
Together with \eqref{nn-1} and \eqref{const1} we get 
\begin{equation}  \label{j-d-one} 
\begin{aligned} 
&\ol J (d,\beta,N,T,\mathbf{R}_2) \\
& \leq  C N^2\int_{\theta\le C s^{-\frac{3}{4}}\beta^{-\frac{1}{4}}N^{-\frac{1}{4}}}\iint_{\mathbf{R}_2} dsdt d\theta\\ 
 &\leq C\beta^{-\frac{1}{4}}N^{\frac{7}{4}} \iint_{\mathbf{R}_2} s^{-\frac{3}{4}} dsdt   \\
  &\leq C\beta^{-\frac{1}{4}}N^{\frac{7}{4}} \int_{4\beta^{-\frac{1}{3}}N^{-\frac{1}{3}}}^{4} \int_{(t^{\frac{3}{4}}-8^{\frac{1}{2}}N^{-\frac{1}{4}}\beta^{-\frac{1}{4}})^{\frac{4}{3}}}^ts^{-\frac{3}{4}} dsdt   \\
&\quad  
+   C\beta^{-\frac{1}{4}}N^{\frac{7}{4}} \int_0^{4\beta^{-\frac{1}{3}}N^{-\frac{1}{3}}} \int_0^{4}s^{-\frac{3}{4}} dsdt   \\
  &\leq C\beta^{-\frac{1}{4}}N^{\frac{7}{4}} \int_{4\beta^{-\frac{1}{3}}N^{-\frac{1}{3}}}^{4}  \big[t^{\frac{1}{4}}- (t^{\frac{3}{4}}-8^{\frac{1}{2}}N^{-\frac{1}{4}}\beta^{-\frac{1}{4}})^{\frac{1}{3}}\big]dt   \\
  &\quad  +   C\beta^{-\frac{1}{4}}N^{\frac{7}{4}} \int_0^{4\beta^{-\frac{1}{3}}N^{-\frac{1}{3}}}dt   \\
&=:H_1(\beta,N) +H_2(\beta,N). 
\end{aligned} 
\end{equation}
Now since $\beta,N\ge1$, we have 
$\beta^{-1/3}N^{-1/3}\le\beta^{-1/4}N^{-1/4}$ and
\begin{equation} \label{h-1-bd}
H_1(\beta,N)\le C\beta^{-1/4}N^{7/4}\beta^{-1/4}N^{-1/4} = C\beta^{-1/2}N^{3/2}.
\end{equation}
Turning to $H_2(\beta,N)$, we use the fact that for every $0<x< y < \infty$, 
\begin{equation*}
\begin{split}
x^{1/3}-y^{1/3} &= \frac{(x^{1/3}-y^{1/3})(x^{2/3}+x^{1/3}y^{1/3}+y^{2/3})}
          {x^{2/3}+x^{1/3}y^{1/3}+y^{2/3}}    \\
&= \frac{x-y}{x^{2/3}+x^{1/3}y^{1/3}+y^{2/3}} \\
&\leq \frac{x-y}{x^{2/3}}.\\
\end{split}
\end{equation*}
By taking 
\begin{align*}
x=t^{3/4}  &&  y=t^{3/4}-8^{1/2}N^{-1/4}\beta^{-1/4}, 
\end{align*} 
and using \eqref{y1} we get the following bound, 
\begin{equation*}
\begin{split}
t^{\frac{1}{4}}- (t^{\frac{3}{4}}-8^{1/2}N^{-\frac{1}{4}}\beta^{-\frac{1}{4}})^{\frac{1}{3}}
&= \frac{8^{1/2}N^{-1/4}\beta^{-1/4}}{t^{1/2} }  \\
&\le CN^{-1/4}\beta^{-1/4}t^{-1/2}.
\end{split}
\end{equation*}
Together with \eqref{j-d-one} it follows that, 
\begin{equation} \label{h-2-bd}
\begin{split}
H_2(\beta,N) &\le C\beta^{-1/4}N^{7/4}\int_{0}^{4}\beta^{-1/4}N^{-1/4}t^{-1/2}dt  \\
&\le C\beta^{-1/2}N^{3/2}.
\end{split}
\end{equation}
By plugging in \eqref{h-1-bd} and \eqref{h-2-bd} into \eqref{j-d-one} we complete the proof.   

%where we have used, 
%\begin{equation*}
%t^{\frac{1}{2}}- (t^{\frac{3}{4}}-4N^{-\frac{1}{4}}\beta^{-\frac{1}{4}})^{\frac{1}{3}} \leq Ct^{-\frac{1}{2}} N^{-\frac{1}{4}}\beta^{-\frac{1}{4}}.
%\end{equation*}
This completes the proof of Lemma \ref{lem-j-r2}. 
\end{proof} 

The following lemma gives us some essential bounds on $\ol J_d (\beta,N,T,\mathbf{R}_3)$ which was defined in 
\eqref{bar-J}. 

 \begin{lemma} \label{lem-j-r3} 
For $d=2,3$ there exists a constant $C>0$ such that the following bound hold for  $\beta \geq 1$, $T>0$ and $N\in\mathbb{Z}^+$,
\begin{equation*}
\ol J (d,\beta,N,T,\mathbf{R}_3)  \leq C_{\ref{lem-j-r3} } \beta^{-1/2} N^{3/2}. 
\end{equation*}
 \end{lemma}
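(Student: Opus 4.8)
The plan is to bound $\ol J(d,\beta,N,T,\mathbf{R}_3)$ by exploiting the defining constraint $|D_t^{(k)}-D_s^{(\ell)}|^2 > 8$ on $\mathbf{R}_3$, together with the fact that $|z| \le 2$ in the domain of integration, so that $|z - D_t^{(k)} + D_s^{(\ell)}|^2 \ge \tfrac{1}{2}|D_t^{(k)}-D_s^{(\ell)}|^2 - 4 \ge C|D_t^{(k)}-D_s^{(\ell)}|^2$ for a suitable constant. I would then apply the law of cosines in the form \eqref{eq:law-cosines} to get
\begin{equation*}
|z - D_t^{(k)} + D_s^{(\ell)}|^2 \ge C\left(|D_t^{(k)}| - |D_s^{(\ell)}|\right)^2 + C|D_t^{(k)}|\cdot|D_s^{(\ell)}|\theta^2,
\end{equation*}
integrate out $z$ over $\mathbf{B}_2(0)$ (gaining a harmless constant), and then integrate over $\theta \in [0,\pi]$ using the Gaussian integral \eqref{eq:est-int-exponential}, which produces a factor $C\sqrt{(t+s)/(|D_t^{(k)}|\cdot|D_s^{(\ell)}|)}$ exactly as in the proof of Lemma \ref{lem-j-r1}.

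Next I would substitute the explicit drift sizes $|D_t^{(k)}| = \beta^{1/4}N^{1/4}t^{3/4}$, $|D_s^{(\ell)}| = \beta^{1/4}N^{1/4}s^{3/4}$ from \eqref{kap-al23} and \eqref{drift}, arriving at an integral of the shape
\begin{equation*}
\ol J(d,\beta,N,T,\mathbf{R}_3) \le C\beta^{-1/4}N^{7/4}\iint_{\mathbf{R}_3} \frac{1}{t+s}\, t^{-3/8}s^{-3/8}\sqrt{t+s}\, \exp\!\left(-C\,\frac{\beta^{1/2}N^{1/2}(t^{3/4}-s^{3/4})^2}{t+s}\right) ds\, dt.
\end{equation*}
The key point distinguishing $\mathbf{R}_3$ from $\mathbf{R}_1$ is that here $t < 4$, so the region is bounded; moreover the constraint $|D_t^{(k)}-D_s^{(\ell)}|^2 > 8$ combined with \eqref{eq:diff-D} forces $s$ to be small relative to $t$, or $\theta$ to be bounded away from $0$ — so either $(t^{3/4}-s^{3/4})^2$ is comparable to $t^{3/2}$ (hence the Gaussian factor gives exponential decay in $\beta^{1/2}N^{1/2}t^{1/2}$), or the angular integration already absorbs the $\theta$-range. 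I would split accordingly, much as in Lemma \ref{lem-j-r2}: when $t > 4\beta^{-1/3}N^{-1/3}$ the exponential factor $\exp(-C\beta^{1/2}N^{1/2}t^{1/2})$ is genuinely small and dominates, while for $t \le 4\beta^{-1/3}N^{-1/3}$ one simply estimates the integrand crudely over a tiny region of area $O(\beta^{-1/3}N^{-1/3}) \cdot O(\beta^{-1/3}N^{-1/3})$, picking up factors of $s^{-3/8}, t^{-3/8}$ which are integrable.

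Carrying out the $s$-integral first (with $t$ fixed), the integrable singularity $s^{-3/8}$ near $s=0$ contributes a bounded amount, and on the complementary range the Gaussian in $(t^{3/4}-s^{3/4})$ localizes $s$ near $t$ over a window of width $\lesssim \beta^{-1/4}N^{-1/4}t^{3/4}(t+s)^{1/2}/\cdots$, after which integrating over $t \in (0,4)$ the resulting powers of $t$ remain integrable and the constants collapse to give the claimed bound $C\beta^{-1/2}N^{3/2}$. I expect the main obstacle to be the bookkeeping of exponents when both the singularities at $s=0$ and the Gaussian concentration near $s=t$ are in play simultaneously; in particular one must check that replacing $t^{3/4}-s^{3/4}$ by $c\,t^{-1/4}(t-s)$ via the mean value theorem (valid since $s \le t$) and then changing variables $w = t - s$ does not introduce a non-integrable singularity from $(v-w)^{-3/8}$ near $w = t$ — this is handled exactly as the $H_1/H_2$ split in Lemma \ref{lem-j-r1}, and the boundedness $t < 4$ only helps. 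Since the region $\mathbf{R}_3$ is compact in $t$, no new large-$t$ behavior arises, so the bound here is in fact stronger (no $T$ dependence) than in Lemma \ref{lem-j-r1}, consistent with the statement.
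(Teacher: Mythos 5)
Your proposal follows essentially the same route as the paper: the lower bound $|z-D_t^{(k)}+D_s^{(\ell)}|^2 \ge c\,|D_t^{(k)}-D_s^{(\ell)}|^2$ using the $\mathbf{R}_3$ constraint $|D_t^{(k)}-D_s^{(\ell)}|^2 > 8$ together with $|z|\le 2$; the law-of-cosines decomposition \eqref{eq:law-cosines}; the angular Gaussian integral producing $\sqrt{(t+s)/(|D_t||D_s|)}$; and the splitting of the $(s,t)$-integration with the mean-value theorem applied to $t^{3/4}-s^{3/4}$ and an $H_1/H_2$-style Gaussian localization near $s=t$. This matches the paper's argument in spirit and in all the essential estimates.

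One caution on precision, since it affects how you would write up the split: the sentence claiming that for $t > 4\beta^{-1/3}N^{-1/3}$ ``the exponential factor $\exp(-C\beta^{1/2}N^{1/2}t^{1/2})$ is genuinely small and dominates'' is not correct as a uniform statement. The exponential actually appearing is $\exp(-C\beta^{1/2}N^{1/2}(t^{3/4}-s^{3/4})^2/(t+s))$, which is close to $1$ whenever $s$ is close to $t$ regardless of how large $t$ is; it only collapses to $\exp(-C\beta^{1/2}N^{1/2}t^{1/2})$ when $s \lesssim \eps t$. The paper's split is therefore taken in the $s$-variable (into $s\ge\eps t$ and $s<\eps t$, see the decomposition into $\ol J_1$ and $\ol J_2$), not in the $t$-variable: on $\{s\ge\eps t\}$ one uses the Gaussian integral over $s$ and the trivial bound $s^{-3/8}\le\eps^{-3/8}t^{-3/8}$, while on $\{s<\eps t\}$ the $\mathbf{R}_3$ constraint forces $t\gtrsim\beta^{-1/3}N^{-1/3}$, which makes $\exp(-C\beta^{1/2}N^{1/2}t^{1/2})$ at most $\exp(-c\beta^{1/3}N^{1/3})$ and this exponential smallness is genuinely needed (without it one only gets $\beta^{-1/4}N^{7/4}$, which does not imply $\beta^{-1/2}N^{3/2}$ for $\beta,N\ge1$). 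Your last paragraph does carry out exactly this $s$-integral split, so the argument goes through; just be aware that the $t$-threshold alone does not deliver the exponential decay, and the crude-area bound on the small-$t$ region (which must retain the $(t+s)^{-1/2}$ factor you dropped, yielding $\int_0^{4\beta^{-1/3}N^{-1/3}}t^{-1/4}\,dt \lesssim \beta^{-1/4}N^{-1/4}$) is a legitimate alternative way to handle the part of $\mathbf{R}_3$ near the origin.
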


\begin{proof} 
Recalling that $|z|<2$, we have 
\begin{equation} \label{k12} 
|z-D_t^{(k)}+D_s^{(\ell)}|^2 \ge |D_t^{(k)}+D_s^{(\ell)}|^2 - 4^2 >  \frac{1}{2} |D_t^{(k)}+D_s^{(\ell)}|^2
\end{equation}
Using \eqref{kap-al23}, \eqref{drift} and \eqref{eq:law-cosines} we get 
\begin{equation} \label{eq:lower-bound-numerator}
\begin{split}
|D_t^{(k)}+D_s^{(\ell)}|^2
   & \ge \big(|D^{(k)}_t|-|D^{(\ell)}_s|\big)^2 + C|D^{(k)}_t|\cdot|D^{(\ell)}_s|\theta^2   \\
&\ge\beta^{1/2}N^{1/2}\left[\big(t^{3/4}-s^{3/4}\big)^2 
     + Ct^{3/4}s^{3/4}\theta^2\right]  .
\end{split}
\end{equation}
From \eqref{k12} and \eqref{eq:lower-bound-numerator} and  we can bound right-hand side of \eqref{bar-J} as follows, 
 \begin{equation} \label{eq:bound-int-theta23}
\begin{split}
&\ol J (d,\beta,N,T,\mathbf{R}_3) \\
&\leq  C N^2\iint_{\mathbf{R}_3}\frac{1}{t+s}
  \exp\left(-\frac{\frac{1}{2}\beta^{\frac{1}{2}}N^{\frac{1}{2}}\left[\big(t^{\frac{3}{4}}-s^{\frac{3}{4}}\big)^2 
      \right] }{2(t+s)}\right) \\
  & \quad  \times \int_{0}^{\pi}  \exp\left(-\frac{\beta^{\frac{1}{2}}N^{\frac{1}{2}}
     t^{\frac{3}{4}}s^{\frac{3}{4}}\theta^2 }{2(t+s)}\right)d\theta
         dsdt .
     \end{split}
\end{equation}
Next, we integrate inner integral on the right-hand side of \eqref{eq:bound-int-theta23} over $\theta$ using $s+t \leq 2t$ and \eqref{eq:est-int-exponential} to get
\begin{equation} \label{eq:bound-int-theta}
\begin{split}
\int_{0}^{\pi} \exp\left(-\frac{\beta^{\frac{1}{2}}N^{\frac{1}{2}}  s^{\frac{3}{4}}t^{\frac{3}{4}}\theta^2}{2t}\right)d\theta 
&\le C\beta^{-\frac{1}{4}}N^{-\frac{1}{4}}s^{-\frac{3}{8}}t^{\frac{1}{8}}. 
\end{split}
\end{equation}
 From \eqref{eq:bound-int-theta23} and \eqref{eq:bound-int-theta} and by using $s+t \geq t$ we get, 
  \begin{equation} \label{s-23}
\begin{split}
&\ol J (d,\beta,N,T,\mathbf{R}_3) \\
& \leq C\iint_{ \mathbf{R}_3}  \left(\beta^{-\frac{1}{4}}N^{-\frac{1}{4}} t^{\frac{1}{8}}s^{-\frac{3}{8}}\right)    \frac{1}{t} 
     \exp\left(-\frac{ C\beta^{\frac{1}{2}}N^{\frac{1}{2}}\big(t^{\frac{3}{4}}-s^{\frac{3}{4}}\big)^2 }{4t}\right)dsdt \\
  &    \leq C\iint_{\mathbf{R}_3 \cap \{s  \geq \eps t]\}}   \beta^{-\frac{1}{4}}N^{-\frac{1}{4}} t^{-\frac{7}{8}}s^{-\frac{3}{8}}   \exp\left(-\frac{ C\beta^{\frac{1}{2}}N^{\frac{1}{2}}\big(t^{\frac{3}{4}}-s^{\frac{3}{4}}\big)^2 }{4t}\right)dsdt \\
    &\quad +  C\iint_{\mathbf{R}_3 \cap \{s \in [0,\eps t]\}}  \beta^{-\frac{1}{4}}N^{-\frac{1}{4}} t^{-\frac{7}{8}}s^{-\frac{3}{8}} 
     \exp\left(-\frac{ C\beta^{\frac{1}{2}}N^{\frac{1}{2}}\big(t^{\frac{3}{4}}-s^{\frac{3}{4}}\big)^2 }{4t}\right)dsdt  \\
&     =: \ol J_1 (d,\beta,N,T,\mathbf{R}_3) +\ol J_2 (d,\beta,N,T,\mathbf{R}_3), 
 \end{split}
\end{equation}
 where $\eps \in (0,1)$. 

First we consider the case where $\eps  t \leq s \leq t$.  
Using the mean value theorem, we see that there exists $r=r(s,t)$ such that
\begin{equation} \label{mean-val} 
t^{3/4}-s^{3/4}= Cr^{-\frac{1}{4}}(t-s) \ge Ct^{-\frac{1}{4}}(t-s).
\end{equation}
Using $s^{-\frac{3}{8}} \leq  \eps^{-\frac{3}{8}}t^{-\frac{3}{8}} $, \eqref{R-def}, \eqref{mean-val} and \eqref{eq:est-int-exponential} on \eqref{s-23} we get
\begin{equation} \label{eq:large-st}
\begin{split}
&\ol J_1 (d,\beta,N,T,\mathbf{R}_3) \\
&\le C(\eps) \beta^{-\frac{1}{4}}N^{-\frac{1}{4}}\int_{0}^{4}t^{-\frac{5}{4}} \int_{0}^{t}  
     \exp\left(-\frac{C\beta^{\frac{1}{2}}N^{\frac{1}{2}}\big(t^{\frac{3}{4}}-s^{\frac{3}{4}}\big)^2 }{4t}\right)dsdt \\
     &\le  C(\eps) \beta^{-\frac{1}{4}}N^{-\frac{1}{4}}\int_{0}^{4}t^{-\frac{5}{4}}  \int_{0}^{t}  
     \exp\left(-\frac{C\beta^{\frac{1}{2}}N^{\frac{1}{2}}t^{-\frac{1}{2}}(t-s)^2 }{4t}\right)dsdt \\
        &\le  C(\eps) \beta^{-\frac{1}{4}}N^{-\frac{1}{4}}\int_{0}^{4} t^{-\frac{5}{4}}    \int_{-\infty}^{\infty}
     \exp\left(-\frac{1}{4}C\beta^{\frac{1}{2}}N^{\frac{1}{2}}t^{-\frac{3}{2}}(t-s)^2 \right)dsdt \\
             &\le C(\eps) \beta^{-\frac{1}{4}}N^{-\frac{1}{4}}\int_{0}^{4}t^{-\frac{5}{4}} \beta^{-\frac{1}{4}}N^{-\frac{1}{4}}t^{\frac{3}{4}}dt \\
&\le C(\eps)\beta^{-\frac{1}{2}}N^{-\frac{1}{2}}\int_{0}^{1}t^{-\frac{1}{2}}   dt \\
  &\le C(\eps)\beta^{-\frac{1}{2}}N^{-\frac{1}{2}}.
\end{split}
\end{equation} 
Next we consider the case of $0\leq s < \eps  t $. Then we have 
\begin{equation} \label{dd0}
\frac{\big(t^{\frac{3}{4}}-s^{\frac{3}{4}}\big)^2 }{4t}  \geq c_1(\eps) t^{\frac{1}{2}}. 
\end{equation} 
Using $0\leq s < \eps  t $ again, now together with \eqref{kap-al23}, \eqref{drift} and \eqref{d-r-1} we get 
\begin{equation*} 
\begin{split}
|D_t^{(k)}+D_s^{(\ell)}|^2
   & \le \big(|D_t|-|D_s|\big)^2 + 4|D_t|\cdot|D_s|    \\
&\le\beta^{1/2}N^{1/2}\left[\big(t^{3/4}-s^{3/4}\big)^2 
     + 4t^{3/4}s^{3/4}\right]  \\
     &\le( 1 +4\eps^{3/4})\beta^{1/2}N^{1/2}t^{3/2} .
\end{split}
\end{equation*}
Recall that on $\mathbf{R}_3$ we have $|D_t^{(k)}+D_s^{(\ell)}|^2 >8$ (see \eqref{R-def}), hence it follows that there exists $c_2(\eps)>0$ such that, 
\begin{equation}  \label{ dd2}
t  \geq c_2(\eps) \beta^{-1/3}N^{-1/3}.
\end{equation}
From \eqref{s-23}, \eqref{dd0} and \eqref{ dd2} we get  
\begin{equation} \label{eq:sm-st2}
\begin{split}
&\ol J_2 (d,\beta,N,T,\mathbf{R}_3) \\
   &\hspace{1cm} 
   \leq C\beta^{-\frac{1}{4}}N^{-\frac{1}{4}}  \int_{0}^{4}s^{-\frac{3}{8}}ds \int_{c_2(\eps) \beta^{-\frac{1}{3}}N^{-\frac{1}{3}}}^{4} t^{-\frac{7}{8}}     
     \exp\left(-C(\eps) \beta^{\frac{1}{2}}N^{\frac{1}{2}}t^{\frac{1}{2}}\right)dt \\  
  &\leq C\beta^{-\frac{1}{4}}N^{-\frac{1}{4}}  \exp\left(-C(\eps) \beta^{\frac{1}{6}}N^{\frac{1}{6}} \right)  \int_{0}^{4} t^{-\frac{7}{8}}     dt \\  
     &\le C\beta^{-\frac{1}{2}}N^{-\frac{1}{2}}.
\end{split}
\end{equation} 
Plugging in \eqref{eq:large-st} and \eqref{eq:sm-st2} into \eqref{s-23} and 
multiplying by $N^2$, we get $\beta^{-1/2}N^{3/2}$ as required.  This 
finishes the proof of Lemma \ref{lem-j-r3}. 
 
\end{proof} 

\begin{proof} [Proof of Proposition \ref{prop-j} for $J_2(d,\beta, N,T)$]  
The bounds on $J_2(d,\beta, N,T)$ for $d=2,3$ follow directly from Lemmas \ref{lem-j-r1}--\ref{lem-j-r3}.
\end{proof} 

\section{Self-intersection occupation measure for $d=2,3$} \label{sec-self-2-3}
In this section we derive the bounds on $J_{d,1}(\beta, N,T)$ from Proposition \ref{prop-j} for $d=2,3$. 
Recall that in this case we are dealing with the occupation measure terms related to a single 
branch which intersects with itself, therefore there are $N$ such contributions.  

We recall that $f^{(k,k,\alpha)}_{t,s}$ is the probability density function of 
$B^{(k)}_t-B^{(k)}_s$ under $P^\lambda_T$. Instead of working with $P^\lambda_T$, we will work with $P_T$ and consider 
processes $B^{(k)}_t+D^{(k)}_t$, where 
%$D^{(k)},D^{(\ell)}$ are the respective drift processes.  
$D^{(k)}$ is the corresponding drift processes.  

Assume that $s<t$.  First note that
\begin{equation*}
B^{(k)}_t-B^{(k)}_s \sim \mathcal N(0,t-s),
\end{equation*}
where as before $\mathcal N(\cdot,\cdot)$ stands for the $d$-dimensional normal distribution.

Recall that the drift magnitudes are given by \eqref{drift}. Without loss of 
generality, we can assume that $D^{(k)}$ points in the $\mathbf{e}_1$
direction, where $\mathbf{e}_1$ is the first coordinate vector. 

For simplicity we define 
\begin{equation} \label{zeta} 
\zeta=\beta^{\frac{1}{4}}N^{\frac{1}{4}},
\end{equation}
then using \eqref{drift} and \eqref{kap-al23} we get 
\begin{equation} \label{dens-felf} 
\begin{split}
f^{(k,k,\alpha)}_{t,s}(z) &= \left(2\pi(t-s)\right)^{-d/2}
 \exp\left(-\frac{|z-(\zeta t^{\frac{3}{4}}-\zeta s^{\frac{3}{4}})
     \mathbf{e}_1|^2}{2(t-s)}\right) \\
&= \left(2\pi(t-s)\right)^{-d/2}
  \exp\left(-\frac{\big(z_1-\zeta t^{\frac{3}{4}}+\zeta s^{\frac{3}{4}}\big)^2}{2(t-s)}\right) \\
&\hspace{4cm}  \times\prod_{k=2}^{d}\exp\left(-\frac{z_k^2}{2(t-s)}\right). 
\end{split}
\end{equation}
 For convenience we again split the area of integration in $J_1(\beta,N,T)$, $\{(s,t): \, 0\leq s \leq t \leq T\}$ to the following subregions. 
\begin{equation} \label{r-i-j1} 
\begin{aligned} 
\hat {\textbf{R}}_1& = \{(s,t) \in [0,T]^2 \, : t^{\frac{3}{4}}-s^{\frac{3}{4}}  > C_{(\ref{r-i-j1})}\zeta^{-1}\}, \\
\hat{\mathbf{R}}_2 &= \{ (t,s) \in [0,T]^2 : t^{\frac{3}{4}}-s^{\frac{3}{4}}
                   < C_{(\ref{r-i-j1})}\zeta^{-1}   \}, 
\end{aligned} 
\end{equation}
where $C_{(\ref{r-i-j1})}>0$ is a constant to be specified later. 

Define  
\begin{equation} \label{hat-j} 
\hat J (d,\beta,N,T, \hat{\mathbf{R}}_i)  = N  \iint_{\hat {\textbf{R}}_i}\int_{\mathbf{B}_2(0)} f^{(1,1,\alpha)}_{t,s}(z) dz dsdt. 
\end{equation}

Since the self-occupation measure is similar for all branches of the polymers it follows from \eqref{eq:L-squared}, 
\begin{equation} \label{j-1-dec} 
 J_1 (d,\beta,N,T) \leq    C\sum_{i=1}^2 \hat J (d,\beta,N,T, \hat{\mathbf{R}}_i). 
\end{equation} 
We introduce a few technical lemmas that will help us to bound $J_1(d,\beta,N,T)$. 
 
\begin{lemma} \label{lem-j1-1} 
For $d=2,3$ there exist positive constants $C_{(\ref{r-i-j1})}$ and $C_{\ref{lem-j1-1}}$ such that the following bound holds for 
$1\leq T \leq N, \,  \beta \geq 1, \, N \in \mathbb Z^+$: 
\begin{equation*}
\hat J (d,\beta,N,T,\hat{\mathbf{R}}_1)  \leq C_{\ref{lem-j1-1}}\beta^{-\frac{1}{2}} N^{\frac{3}{2}} T^{\frac{1}{2}}\log(\beta T).
\end{equation*}
\end{lemma}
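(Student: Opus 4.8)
The plan is to estimate the triple integral defining $\hat J(d,\beta,N,T,\hat{\mathbf{R}}_1)$ in \eqref{hat-j} by integrating out $z$ first, then $s$ using the cutoff built into $\hat{\mathbf{R}}_1$, and finally $t$ by a scaling substitution; the structural feature that makes this work is that on $\hat{\mathbf{R}}_1$ the drift separation $\zeta(t^{3/4}-s^{3/4})$ exceeds $C_{(\ref{r-i-j1})}$, which forces the density $f^{(1,1,\alpha)}_{t,s}$ to be small on $\mathbf{B}_2(0)$. I would fix $C_{(\ref{r-i-j1})}:=4$ (this choice then governs $\hat{\mathbf{R}}_1,\hat{\mathbf{R}}_2$ throughout Section~\ref{sec-self-2-3}). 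Since $|z|\le2$ on $\mathbf{B}_2(0)$ and $\mathbf{B}_2(0)\subset[-2,2]^d$, integrating the product form \eqref{dens-felf} coordinatewise gives
\begin{equation*}
\int_{\mathbf{B}_2(0)}f^{(1,1,\alpha)}_{t,s}(z)\,dz \;\le\; \Bigl(\min\{1,\,C(t-s)^{-1/2}\}\Bigr)^{d}\exp\!\left(-\frac{\bigl(\zeta(t^{3/4}-s^{3/4})-2\bigr)^{2}}{2(t-s)}\right) \;\le\; \exp\!\left(-\frac{\zeta^{2}\bigl(t^{3/4}-s^{3/4}\bigr)^{2}}{8(t-s)}\right),
\end{equation*}
where the last step uses $(\min\{1,\cdot\})^{d}\le1$ and, on $\hat{\mathbf{R}}_1$ with $C_{(\ref{r-i-j1})}=4$, the inequality $\zeta(t^{3/4}-s^{3/4})-2\ge\tfrac12\zeta(t^{3/4}-s^{3/4})$. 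The elementary bound $t^{3/4}-s^{3/4}=\int_s^t\tfrac34u^{-1/4}\,du\ge\tfrac34 t^{-1/4}(t-s)$ then converts this into $\exp(-aw)$, with $w:=t-s$ and $a:=\tfrac{9}{128}\zeta^{2}t^{-1/2}$.

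Next, for fixed $t$ the set $\{s:(s,t)\in\hat{\mathbf{R}}_1\}$ corresponds to $w\in(w_0(t),t]$, where $w_0(t)$ is defined by $t^{3/4}-(t-w_0(t))^{3/4}=C_{(\ref{r-i-j1})}\zeta^{-1}$ (and is empty unless $t>t_{\min}:=(4\zeta^{-1})^{4/3}$). Hence
\begin{equation*}
\hat J(d,\beta,N,T,\hat{\mathbf{R}}_1)\;\le\; N\int_{t_{\min}}^{T}\!\!\int_{w_0(t)}^{t}e^{-aw}\,dw\,dt\;\le\; N\int_{t_{\min}}^{T}\frac{e^{-aw_0(t)}}{a}\,dt.
\end{equation*}
The key quantitative input is the lower bound $a\,w_0(t)\ge c\,\zeta t^{-1/4}$, which I would get from a short two-case mean value argument: if $w_0(t)>t/2$ then $a w_0(t)>at/2=\tfrac{9}{256}\zeta^{2}t^{1/2}=\tfrac{9}{256}(\zeta t^{3/4})(\zeta t^{-1/4})\ge\tfrac{9}{64}\zeta t^{-1/4}$ since $\zeta t^{3/4}\ge4$ on $t>t_{\min}$; while if $w_0(t)\le t/2$, applying the mean value theorem from the other side gives $t^{3/4}-(t-w_0)^{3/4}\le\tfrac34(t/2)^{-1/4}w_0$, hence $w_0(t)\ge c\zeta^{-1}t^{1/4}$ and again $a w_0(t)\ge c'\zeta t^{-1/4}$. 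Because $t\le T\le N$ and $\beta\ge1$ we have $\zeta t^{-1/4}\ge\zeta T^{-1/4}=\beta^{1/4}(N/T)^{1/4}\ge1$, so $\tfrac{e^{-aw_0(t)}}{a}\le C\zeta^{-2}t^{1/2}e^{-c\zeta t^{-1/4}}$.

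It then remains to carry out $\int_0^T t^{1/2}e^{-c\zeta t^{-1/4}}\,dt$, and the substitution $y=\zeta t^{-1/4}$ (so $t=\zeta^{4}y^{-4}$) reduces this to $4\zeta^{6}\int_{\zeta T^{-1/4}}^{\infty}y^{-7}e^{-cy}\,dy\le C\zeta^{-1}T^{7/4}e^{-c\zeta T^{-1/4}}$, using $\zeta T^{-1/4}\ge1$. Collecting constants gives
\begin{equation*}
\hat J(d,\beta,N,T,\hat{\mathbf{R}}_1)\;\le\; CN\zeta^{-3}T^{7/4}e^{-c\zeta T^{-1/4}}\;=\;C\beta^{-3/4}N^{1/4}T^{7/4}\,e^{-c\beta^{1/4}(N/T)^{1/4}},
\end{equation*}
and since $T\le N$ and $\beta\ge1$, the ratio of the right-hand side to $\beta^{-1/2}N^{3/2}T^{1/2}$ is $\beta^{-1/4}(T/N)^{5/4}e^{-c\beta^{1/4}(N/T)^{1/4}}\le1$. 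This gives the claimed bound (the argument in fact yields it without the logarithmic factor), and the reasoning is identical for $d=2$ and $d=3$, the dimension entering only through the harmless factor $(\min\{1,C(t-s)^{-1/2}\})^{d}\le1$.

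The step I expect to be the main obstacle is the lower bound $a\,w_0(t)\ge c\zeta t^{-1/4}$: one must control the cutoff $w_0(t)$, which is only defined implicitly through $t^{3/4}-(t-w_0)^{3/4}=C_{(\ref{r-i-j1})}\zeta^{-1}$, and both sub-cases of the mean value estimate have to be pushed through to pin down where the Gaussian decay in $w$ actually takes over. Once that location is identified, the final scaling substitution and the use of $T\le N$, $\beta\ge1$ to absorb the leftover polynomial powers into the exponential are routine; the pitfall to avoid is substituting $y=\zeta t^{-1/4}$ before isolating $w_0(t)$, or bounding $e^{-c\zeta t^{-1/4}}$ crudely by a constant, either of which loses the estimate.
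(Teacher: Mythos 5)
Your proof is correct in its essentials, and it takes a genuinely different route from the paper, one that is in fact sharper. The paper keeps the prefactor $(t-s)^{-1/2}$ after the $z$-integration, crudely replaces the $t$-dependent Gaussian rate by $T^{-1/2}$ (see \eqref{t3}), and then introduces an artificial cut $t-s=\gamma$ with $\gamma$ in \eqref{t5} manufactured to contain a logarithm; the $\log(\beta T)$ in the statement is created precisely by this choice. You instead discard the harmless $(t-s)^{-1/2}$ prefactor, keep the $t$-dependent rate $a(t)=\tfrac{9}{128}\zeta^2 t^{-1/2}$, and use the cutoff $w_0(t)$ that is already built into $\hat{\mathbf{R}}_1$ rather than inventing a new one. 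The two-case mean-value estimate $a(t)w_0(t)\ge c\,\zeta t^{-1/4}$, followed by the scaling $y=\zeta t^{-1/4}$ and the observation $\zeta T^{-1/4}\ge1$ under $T\le N$, $\beta\ge1$, closes the argument cleanly and yields $\hat J\le C\beta^{-1/2}N^{3/2}T^{1/2}$ with no logarithmic loss. Since $\hat{\mathbf{R}}_1$ is the only source of the logarithm in Proposition~\ref{prop-j} (Lemmas \ref{lem-j-r1}--\ref{lem-j-r3} and \ref{lem-j1-2} carry no $\log$), your argument shows the $\log(\beta T)$ in the lemma is not needed.

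One step should be tightened. Your inequality
\begin{equation*}
\int_{\mathbf{B}_2(0)}f^{(1,1,\alpha)}_{t,s}(z)\,dz \;\le\; \Bigl(\min\{1,\,C(t-s)^{-1/2}\}\Bigr)^{d}\exp\!\left(-\frac{\bigl(\zeta(t^{3/4}-s^{3/4})-2\bigr)^{2}}{2(t-s)}\right)
\end{equation*}
is claimed to come from integrating coordinatewise, but coordinatewise integration only gives the first-coordinate contribution the form $\min\bigl\{1,\,K e^{-\alpha}\bigr\}$ with $K=4/\sqrt{2\pi(t-s)}$ and $\alpha=(\mu-2)^2/(2(t-s))$, and it is \emph{not} true in general that $\min\{1,Ke^{-\alpha}\}\le \min\{1,K\}e^{-\alpha}$ (take $K>1$ and $\alpha$ moderate). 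The conclusion you actually use downstream, namely
\begin{equation*}
\int_{\mathbf{B}_2(0)}f^{(1,1,\alpha)}_{t,s}(z)\,dz \le \exp\!\left(-\frac{\bigl(\zeta(t^{3/4}-s^{3/4})-2\bigr)^{2}}{2(t-s)}\right),
\end{equation*}
is nonetheless correct; it follows directly from the one-sided Gaussian tail bound $\Phi(-x)\le\tfrac12 e^{-x^2/2}$ applied to the $z_1$-integral (the other $d-1$ coordinate integrals are each at most $1$), or from the standard split $(z_1-\mu)^2\ge\tfrac12(z_1-\mu)^2+\tfrac12(\mu-2)^2$ at the cost of a factor $\sqrt2$ and a $4(t-s)$ in the denominator. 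Either fix leaves your constant $a(t)$ and everything after it unchanged up to harmless numerical adjustments. With that repaired, the proof is complete and, as you note, establishes the bound without the logarithmic factor.
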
 
\begin{proof}  
From \eqref{r-i-j1} we can choose $C_{(\ref{r-i-j1})}$ in the definition of $\hat{\mathbf{R}}_1$ such that 
\begin{equation} \label{t1}  
(z_1-\zeta t^{\frac{3}{4}}+\zeta s^{\frac{3}{4}})^2 \geq \frac{1}{2}(\zeta t^{3/4}-\zeta s^{3/4})^2, \quad \textrm{for all }t,s \in \hat{\mathbf{R}}_1. 
\end{equation} 
From \eqref{dens-felf}, \eqref{hat-j} and \eqref{t1} it follows that 
\begin{equation} \label{b-bnd1} 
\begin{split}
\hat J (d,\beta,N,T,\hat{\mathbf{R}}_1)   
&\leq N \iint_{\hat{\mathbf{R}}_1} \frac{1}{\sqrt{2\pi (t-s)}}  
\int_{|z_1|<2} e^{-\frac{\big(z_1-\zeta t^{\frac{3}{d+2}} +\zeta s^{\frac{3}{4}}\big)^2}{2(t-s)}}dz_1  \\
&\hspace{2cm} \times\prod_{k=2}^{d} \left(\int_{|z_k|<2}  \frac{1}{\sqrt{2\pi (t-s)}} 
     e^{-\frac{z_k^2}{2(t-s)}}dz_k\right) ds dt\\ 
&\leq N \iint_{\hat{\mathbf{R}}_1}  \frac{1}{\sqrt{2\pi (t-s)}}  
\int_{|z_1|<2} e^{-\frac{\big(\zeta t^{\frac{3}{4}} -\zeta s^{\frac{3}{4}}\big)^2}{4(t-s)}}dz_1  \\
&\hspace{2cm} \times\prod_{k=2}^{d} \left(\int_{z_k\in\mathbf{R}}  \frac{1}{\sqrt{2\pi (t-s)}} 
     e^{-\frac{z_k^2}{2(t-s)}}dz_k\right) ds dt\\ 
&\leq N \iint_{\hat{\mathbf{R}}_1}\frac{C}{\sqrt{2\pi(t-s)}}
    \exp\left(-\frac{\zeta^2\big(t^{\frac{3}{4}}-s^{\frac{3}{4}}\big)^2}{4(t-s)}\right)ds dt. 
\end{split}
\end{equation}
Since $0<s<t$, the mean value theorem implies that for some $r\in(s,t)$ we have
\begin{equation*}
\begin{split}
\frac{\big(t^{\frac{3}{4}}-s^{\frac{3}{4}}\big)^2}{t-s}
&= \left(\frac{t^{\frac{3}{4}}-s^{\frac{3}{4}}}{t-s}\right)^2(t-s)
= \big(r^{-\frac{1}{4}}\big)^2(t-s)  \\
&\ge  t^{-\frac{1}{2}} (t-s), 
\end{split}
\end{equation*}
where in the last line follows because $r<t$. 

We therefore have 
\begin{equation*}
\begin{split}
\hat J (d,\beta,N,T,\hat{\mathbf{R}}_1)  
\le N\iint_{\hat{\mathbf{R}}_1} \frac{C}{\sqrt{2\pi(t-s)}}
    \exp\left(-\frac{1}{4}t^{-\frac{1}{2}}(t-s)\right)ds dt.
\end{split}
\end{equation*}
Using the fact that $0\leq s<t \leq T$ and
\begin{equation*}
\int_{\mathbf{B}_2(0)} f^{(k,k,\alpha)}_{t,s}(z)dz \leq 1, 
\end{equation*}
together with \eqref{hat-j} we arrive at 
\begin{equation} \label{t3} 
\begin{aligned} 
&\hat J (d,\beta,N,T,\hat{\mathbf{R}}_1)   \\ 
&
\le N \iint_{\hat{\mathbf{R}}_1} \left( \left( \frac{C}{\sqrt{2\pi(t-s)}}
    \exp\left(-\frac{1}{4}T^{-\frac{1}{2}}\zeta^2(t-s)\right)  \right) \wedge  1  \right)  ds dt . 
\end{aligned} 
\end{equation}
Define 
%\begin{align} \label{gammas} 
% \gamma_1 = \frac{5}{4}, \qquad  \gamma_2 = \frac{3}{4}, 
%\end{align}
%and  
\begin{equation} \label{t5} 
\gamma :=  \frac{1}{4}(N \beta)^{-\frac{1}{2}} T^{\frac{1}{2}} \log (  T^{\frac{5}{4}} \beta^{\frac{3}{4}}). 
\end{equation}
 From \eqref{zeta} and \eqref{t5} we get for all $t-s >\gamma$, 
\begin{equation} \label{eq:estimate-density}
\begin{split}
& \frac{1}{\sqrt{2\pi(t-s)}} 
    \exp\left(- \frac{1}{4}T^{-\frac{1}{2}}\zeta^2(t-s) \right)  \\
&\leq C\gamma^{- 1/2} \exp\left(- \frac{1}{4}T^{-\frac{2d-2}{d+2}} \zeta^2 \gamma\right) \\
&    \leq  C\gamma^{- 1/2} \exp\left(- \frac{1}{4}T^{-\frac{1}{2}} N^{\frac{1}{2}} 
             \beta^{\frac{1}{2}} \gamma\right)  \\
&    \leq C (N \beta)^{\frac{1}{4}} T^{-\frac{1}{4}}  
        \log ( T^{\frac{5}{4}} \beta^{\frac{3}{4}})^{-1/2}
            (  T^{\frac{5}{4}} \beta^{\frac{3}{4}})^{-1} \\
&     \leq C\beta^{-\frac{1}{2}} N^{\frac{1}{2}} T^{-\frac{3}{2}}
        (\log (\beta T ))^{-1/2}. 
\end{split}
\end{equation} 
%where we have plugged in the values of $\gamma_i$ from \eqref{gammas} in the last inequality.   

From \eqref{t3}, \eqref{eq:estimate-density} and since $0<s<t<T$ it 
follows that 
\begin{equation*}
\begin{split}
&\hat J (d,\beta,N,T,\hat{\mathbf{R}}_1)   \\ 
&
\le CN\int_{0}^T  \int_{0}^{t-\gamma} \frac{1}{\sqrt{2\pi(t-s)}}
    \exp\left(-\frac{1}{4}t^{-\frac{1}{2}}(t-s)\right)ds dt \\ 
&\quad + CN\int_{0}^T  \int_{t-\gamma}^{t}1 ds dt \\ 
&
\le CN\int_{0}^T  \int_{0}^{T} \beta^{-\frac{1}{2}} N^{\frac{1}{2}} T^{-\frac{3}{2}}
        (\log (\beta T ))^{-1/2} ds dt + CNT\gamma \\
&
\le C \beta^{-\frac{1}{2}} N^{\frac{3}{2}} T^{\frac{1}{2}}
        (\log (\beta T ))^{-1/2} 
  + CNT(N \beta)^{-\frac{1}{2}} T^{\frac{1}{2}} \log (  T  \beta)\\
&\le C \beta^{-\frac{1}{2}} N^{\frac{3}{2}} T^{\frac{1}{2}}
        (\log (\beta T ))^{-1/2} 
  + C\beta^{-\frac{1}{2}} N^{\frac{1}{2}}T^{\frac{3}{2}} \log (  T  \beta) .
\end{split}
\end{equation*}
By choosing 
\begin{equation} \label{eq:T-bound-1}
T\le N, 
\end{equation}
we get
\begin{equation*}
\begin{split}
\hat J (d,\beta,N,T,\hat{\mathbf{R}}_1)  \le C \beta^{-\frac{1}{2}} N^{\frac{3}{2}} T^{\frac{1}{2}}
        \log (\beta T ),  \quad \textrm{for all } T,\beta \geq 1, 
\end{split}
\end{equation*}
This completes the proof of Lemma \ref{lem-j1-1}.  
\end{proof} 

\begin{lemma} \label{lem-j1-2} 
For $d=2,3$ there exists a constant $C_{\ref{lem-j1-2} }>0$ such that the following bound holds for $ \beta  \geq 1$, $T>0$ and $N\in \mathbb{Z}^+$:
\begin{equation*}
\hat J (d,\beta,N,T,\hat{\mathbf{R}}_2)  \leq C_{\ref{lem-j1-2} } \beta^{-2/3} N^{1/3}.
\end{equation*}
\end{lemma}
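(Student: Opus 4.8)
The plan is to estimate $\hat J(d,\beta,N,T,\hat{\mathbf{R}}_2)=N\iint_{\hat{\mathbf{R}}_2}\int_{\mathbf{B}_2(0)}f^{(1,1,\alpha)}_{t,s}(z)\,dz\,ds\,dt$ by exploiting the one structural fact built into $\hat{\mathbf{R}}_2$: on it the drift displacement $\mu=\mu(t,s):=\zeta(t^{3/4}-s^{3/4})$ is bounded above by the absolute constant $C_{(\ref{r-i-j1})}$ chosen in the proof of Lemma~\ref{lem-j1-1}, so that $s$ is forced close to $t$, while the longitudinal Gaussian in \eqref{dens-felf} decays whenever $\mu$ is also bounded \emph{below}.

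First I would record the pointwise bound on the inner integral. Integrating the $d-1$ transverse factors of \eqref{dens-felf} trivially (each is $\le1$) and the longitudinal factor either trivially or by completing the square, one obtains
\[
\int_{\mathbf{B}_2(0)}f^{(1,1,\alpha)}_{t,s}(z)\,dz\le C\big((t-s)^{-1/2}\wedge1\big)\exp\!\Big(-\tfrac{(\mu-2)_+^2}{2(t-s)}\Big),
\]
with an extra factor $\big((t-s)^{-1/2}\wedge1\big)$ available when $d=3$. Next I would describe $\hat{\mathbf{R}}_2$ in the variables $(t,w)$ with $w=t-s$: setting $t_*:=(C_{(\ref{r-i-j1})}\zeta^{-1})^{4/3}\asymp(\beta N)^{-1/3}$, the mean value theorem gives that for $t\le t_*$ the whole triangle $\{0\le s\le t\}$ lies in $\hat{\mathbf{R}}_2$, while for $t>t_*$ one has $0\le w\le W(t)$ with $W(t)\le C\,t^{1/4}\zeta^{-1}$, and $\mu\asymp\zeta w\,t^{-1/4}$ along the bulk of that interval, so $\{\mu\le2\}$ corresponds to $w$ in a fixed fraction of $[0,W(t)]$.

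Then I would split $\hat{\mathbf{R}}_2$ and estimate each piece. On the corner $\{0\le s\le t\le t_*\}$ I bound the inner integral by $1$; its area is $\tfrac12 t_*^2\asymp(\beta N)^{-2/3}$, so this piece contributes at most $CN(\beta N)^{-2/3}=C\beta^{-2/3}N^{1/3}$, which is the target order. On $\{t>t_*\}\cap\{\mu>2\}$ one has $\mu\in(2,C_{(\ref{r-i-j1})}]$ and $t-s\asymp t^{1/4}\zeta^{-1}$, so the exponential factor is at most $\exp(-c\,\zeta\,t^{-1/4})$; after the change of variables $w\mapsto\mu$ and an application of \eqref{eq:est-int-exponential} (together with the standard finiteness $\int_0^1x^{-\eta/2}e^{-x^{\eta}}dx<\infty$ for $\eta\in(0,2)$), this piece is absorbed. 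The remaining piece $\{t>t_*\}\cap\{\mu\le2\}$ carries no exponential gain, and must be controlled using only the narrowness of the $w$-range $\big(w\lesssim\zeta^{-1}t^{1/4}\big)$ and the normalization decay $(t-s)^{-1/2}$ (resp.\ $(t-s)^{-1}$ in $d=3$).

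The main obstacle is this last piece: it is the part of $\hat{\mathbf{R}}_2$ near the diagonal but away from the origin, where the outward drift provides no suppression of self-intersections, so every bit of the required decay has to be extracted from the Gaussian normalization together with the geometric thinness of $\hat{\mathbf{R}}_2$. Getting this case to close — alongside the bookkeeping of the $w\leftrightarrow\mu\leftrightarrow t$ substitutions and the elementary integrals they produce — is what makes the argument technical.
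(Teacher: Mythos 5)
Your decomposition is in the same spirit as the paper's, and you have put your finger on a genuine issue: the near-diagonal, large-$t$ piece $\{t>t_*\}\cap\{\mu\le 2\}$ is not controllable by the stated bound, and neither you nor the paper close that case. The paper's proof reaches $\hat J\le N|\hat{\mathbf{R}}_2|\lesssim N(\beta N)^{-2/3}$ by asserting, in display \eqref{ee1}, that on $\hat{\mathbf{R}}_2$ one has $s\le C\beta^{-1/3}N^{-1/3}$. That inequality does \emph{not} follow from $t^{3/4}-s^{3/4}<C_{(\ref{r-i-j1})}\zeta^{-1}$, which is satisfied whenever $s$ is close to $t$ for any $t\in[0,T]$; the resulting area estimate in \eqref{case2-int} covers only the corner $\{0\le s\le t\lesssim(\beta N)^{-1/3}\}$ and silently drops the full near-diagonal band $\{0\le t-s\lesssim\zeta^{-1}t^{1/4},\ t\in[1,T]\}$, which also lies in $\hat{\mathbf{R}}_2$. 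This band is precisely your case (c).

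Moreover, that case cannot be closed, because the lemma as stated is too strong. On the band with $t-s\le\tfrac12\zeta^{-1}t^{1/4}$ and $t\in[1,T]$ one has $t-s\le 1$ and drift displacement $\zeta(t^{3/4}-s^{3/4})\le1$ (since $T\le N$, $\beta\ge1$ give $\zeta^{-1}T^{1/4}\le\beta^{-1/4}\le1$), so $\int_{\mathbf{B}_2(0)}f^{(1,1,\alpha)}_{t,s}(z)\,dz$ is bounded below by a positive constant there. Hence
\begin{equation*}
\hat J(d,\beta,N,T,\hat{\mathbf{R}}_2)\ \gtrsim\ N\int_1^T\zeta^{-1}t^{1/4}\,dt\ \asymp\ N\zeta^{-1}T^{5/4}\ =\ \beta^{-1/4}N^{3/4}T^{5/4},
\end{equation*}
which exceeds $\beta^{-2/3}N^{1/3}$ by the factor $\beta^{5/12}N^{5/12}T^{5/4}\ge1$. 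A $T$-dependent right-hand side is therefore unavoidable, just as in the one-dimensional analogue (Lemma~\ref{dim1-lem-j1}); your piece (c) needs the ``trivial bound on a thin strip'' treatment used for the near-diagonal term in the proof of Lemma~\ref{lem-j1-1}, producing the extra powers of $T$. In short, the obstacle you flagged is real, and it exposes a gap in the paper's own proof rather than a deficiency in your approach.
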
 

\begin{proof} 
From \eqref{zeta} and \eqref{r-i-j1} it follows that on $\hat{\mathbf{R}}_2$ we have 
\begin{equation} \label{ee1} 
\begin{aligned} 
s^{\frac{3}{d+2}}  &\leq t^{\frac{3}{d+2}} 
   \leq  C\beta^{-\frac{1}{d+2}}N^{-\frac{1}{d+2}} 
           + s^{\frac{3}{d+2}},  \\
s  &\leq C\beta^{-\frac{1}{3}}N^{-\frac{1}{3}}. 
\end{aligned} 
\end{equation}
By integrating over $z$ in the right-hand side of \eqref{hat-j} and then using \eqref{ee1} we get  
\begin{equation}  \label{case2-int} 
\begin{aligned}  
\hat J(d,\beta,N,T,\hat{\mathbf{R}}_2) &\leq N |\hat{\mathbf{R}}_2| \\ 
  &  \leq  N\int_{0}^{C\beta^{-\frac{1}{3}}N^{-\frac{1}{3}}} \int_{0}^{ (C\beta^{-\frac{1}{4}}N^{-\frac{1}{4}} 
            + s^{\frac{3}{4}})^{\frac{4}{3}} } dt ds  \\
   & \leq   N\int_{0}^{C\beta^{-\frac{1}{3}}N^{-\frac{1}{3}}} 
       ( C\beta^{-\frac{1}{4}}N^{-\frac{1}{4}} 
             + s^{\frac{3}{4}})^{\frac{4}{3}} ds \\
             & \leq C N\int_{0}^{C\beta^{-\frac{1}{3}}N^{-\frac{1}{3}}} (\beta^{-\frac{1}{4}}N^{-\frac{1}{4}})^{\frac{4}{3}} ds   \\ 
             &\leq C \beta^{-\frac{2}{3}}N^{\frac{1}{3}}. 
 \end{aligned}
\end{equation}
This completes the proof of Lemma \ref{lem-j1-2}. 
\end{proof}

\begin{proof} [Proof of Proposition \ref{prop-j} for $J_1(d,\beta, N,T)$]  
The bounds on $J_1(d,\beta, N,T)$ for $d=2,3$ follow directly from Lemmas \ref{lem-j1-1} and \ref{lem-j1-2} and from \eqref{j-1-dec}.
\end{proof} 

 \section{Proof of Proposition \ref{prop-i-2} for $d=1$} \label{sec-local-1}
Following \eqref{kap-al23} we choose 
\begin{equation}\label{kap-al1} 
\kappa = \beta^{1/3} N^{1/3}, \quad \alpha =0, 
\end{equation}
for the parameters of drift magnitude given by \eqref{drift}. Note that in 
the one-dimensional case we give all $N$ particles drift in the positive 
direction of $\mathbb{R}$. 
 
\subsection{Cross-intersection occupation measure} 
We follow the argument in Section \ref{sec-cross-2-3}, with the 
change that under $\mathbb{P}_T^\lambda$ all drifts go in the positive 
direction on $\mathbb{R}$.  Recall that there are $O(N^2)$ pairs of 
particles.  We therefore have as in \eqref{eq:L-squared} as follows, 
\begin{equation}  \label{fg11} 
\begin{aligned} 
&J_2(1,\beta, N,T) 
:= CN^2 \int_0^T\int_0^T \int_{\mathbf{B}_2(0)} f^{(k,\ell,\alpha)}_{t,s}(z) dz ds dt \\
&\leq CN^2 \int_0^T\int_0^T \left( \left( \frac{1}{(t+s)^{1/2} } \int_{|z_1|<2} e^{-\frac{(z_1-\beta^{1/3} N^{1/3}( t  - s))^2}{2(t+s)}}dz_1 \right) \wedge 1 \right)ds dt.
\end{aligned}
\end{equation} 

The following lemma give us the essential bound on $J_2(1,\beta, N,T)$. 
\begin{lemma} \label{dim1-lem-j2}
There exists a constant $C_{\ref{dim1-lem-j2}}>0$ such that for all  $\beta, T \geq 1$ and $N\in\mathbb{Z}^+$ the following bound holds: 
\begin{equation*}
 J_2(1,\beta, N,T) \leq C_{\ref{dim1-lem-j2}}\beta^{2/3} N^{5/3}T.
 \end{equation*}
\end{lemma}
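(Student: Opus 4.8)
The plan is to reduce the bound to an explicit double integral and evaluate it by an elementary change of variables. Write $b:=\beta^{1/3}N^{1/3}$; since $\beta,N\geq1$ we have $b\geq1$. Starting from \eqref{fg11}, on the set $\{|z_1|<2\}$ one has $|z_1-b(t-s)|\geq(b|t-s|-2)_{+}$, so the inner Gaussian integral is at most $C\exp\!\big(-(b|t-s|-2)_{+}^2/(8(t+s))\big)$, and therefore
\begin{equation*}
J_2(1,\beta,N,T)\;\leq\;CN^2\iint_{[0,T]^2}\min\!\left(1,\ \frac{1}{\sqrt{t+s}}\exp\!\Big(-\frac{(b|t-s|-2)_{+}^2}{8(t+s)}\Big)\right)ds\,dt.
\end{equation*}
Thus it suffices to show that this double integral is at most $CT/b$, since then
\begin{equation*}
J_2(1,\beta,N,T)\leq C\,\frac{N^2T}{b}=C\beta^{-1/3}N^{5/3}T\leq C\beta^{2/3}N^{5/3}T,
\end{equation*}
the last step using $\beta\geq1$.

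To estimate the double integral I would change variables to $u=t-s$ and $v=t+s$, which introduces a Jacobian factor $\tfrac12$; by the symmetry $s\leftrightarrow t$ it is enough to integrate over $u\geq0$ and multiply by $2$, and the image of $[0,T]^2$ is contained in $\{0\leq u\leq v\leq2T\}$. I would then split this region at $u=4/b$. On $\{0\leq u\leq4/b\}$ I bound the integrand simply by $\min(1,v^{-1/2})$ and use $\int_0^{2T}\min(1,v^{-1/2})\,dv\leq C\sqrt T$ (valid for $T\geq1$), so this piece contributes at most $C b^{-1}\sqrt T\leq CT/b$.

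On the complementary region $\{u>4/b\}$ we have $bu-2\geq bu/2$, hence $(bu-2)_{+}^2\geq b^2u^2/4$; moreover $v\leq2T$ gives $\exp(-b^2u^2/(32v))\leq\exp(-b^2u^2/(64T))$. Combining these with $\int_u^{2T}v^{-1/2}\,dv\leq2\sqrt{2T}$ and then $\int_0^{\infty}\exp(-b^2u^2/(64T))\,du=C\sqrt T/b$ shows that this piece also contributes at most $CT/b$. Adding the two contributions yields $\iint_{[0,T]^2}(\cdots)\,ds\,dt\leq CT/b$, which completes the proof of Lemma \ref{dim1-lem-j2}.

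The one point that is not purely mechanical is that the Gaussian factor must genuinely be exploited: discarding it and bounding the integrand by $\min(1,(t+s)^{-1/2})$ only gives $J_2(1,\beta,N,T)\leq CN^2T^{3/2}$, which is both too large in $T$ and has no decay in $b$. The device that fixes this is to use $t+s\leq2T$ inside the exponent so as to retain a true Gaussian in the variable $u=t-s$; integrating that Gaussian over $du$ supplies simultaneously the needed factor $T^{-1/2}$ and the decay $b^{-1}=\beta^{-1/3}N^{-1/3}$. Every remaining estimate is a routine one–variable computation.
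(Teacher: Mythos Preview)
Your proof is correct and follows essentially the same route as the paper: both change variables to $u=t-s$, $v=t+s$, split according to whether $u$ is small or large compared to $(\beta^{1/3}N^{1/3})^{-1}$, bound the small-$u$ piece by area, and on the large-$u$ piece exploit the Gaussian in $u$ to extract the factor $\beta^{-1/3}N^{-1/3}$. The only cosmetic difference is the order of integration on the large-$u$ piece---the paper integrates the Gaussian in $u$ for each fixed $v$ (yielding $C/b$ uniformly in $v$) and then integrates $dv$ over $[0,2T]$, whereas you first bound $v\leq 2T$ inside the exponent and then integrate; both give the same $CT/b$.
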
 
\begin{proof} 
From \eqref{fg11} we have , 
\begin{equation}  \label{fg21} 
\begin{aligned} 
&J_2(1,\beta, N,T) \\
&\leq CN^2 \int_0^T\int_0^T \left( \left( \frac{1}{(t+s)^{1/2}}  \int_{|z_1|<2} e^{-\frac{(z_1-\beta^{1/3} N^{1/3}( t  - s ))^2}{2(t+s)}}dz_1 \right) \wedge 1 \right)ds dt. 
\end{aligned}
\end{equation} 

Note that, \begin{equation} \label{c-choice} 
(2-\beta^{1/3} N^{1/3}(t-s))^2 \geq \frac{1}{2}\beta^{2/3} N^{2/3}(t-s)^2, \quad  \textrm{for all } t-s >8\beta^{-\frac{1}{3}} N^{-\frac{1}{3}},
\end{equation}
and consider the following regions: 
\begin{equation}  \label{regions-1dim} 
 \begin{aligned}
\hat{\mathbf{R}}_1 &= \{ (t,s) \in [0,T]^2: \beta^{1/3} N^{1/3}(t-s) > 8\}  \\
\hat{\mathbf{R}}_2 &= \{ (t,s) \in [0,T]^2: \beta^{1/3} N^{1/3}(t-s) \leq  8\}. 
\end{aligned} 
\end{equation} 
We define  
\begin{equation} \label{dim1-j2-def} 
\tilde{J}_2(1,\beta,N,T,\hat{\mathbf{R}}_i) := N^2 \iint_{\hat{\mathbf{R}}_i} \int_{\mathbf{B}_2(0)} f^{(1,1,\alpha)}_{t,s}(z)dz  ds dt, \quad i=1,2, 
\end{equation}
and note that 
\begin{equation} \label{fg432} 
J_2(1,\beta,N,T) \leq \sum_{i=1}^2 \tilde{J}_2(1,\beta,N,T,\hat{\mathbf{R}}_i ). 
\end{equation} 
From \eqref{fg21}, \eqref{c-choice}, \eqref{regions-1dim} and \eqref{dim1-j2-def} we get that 
\begin{equation}  \label{fg31} 
\begin{aligned} 
&\tilde{J}_2(1,\beta,N,T,\hat{\mathbf{R}}_1)\\
&\leq C N^2  \int_{0}^T  \int_{0}^t \left ( \frac{1}{(2\pi (t+s))^{1/2}}  e^{-\beta^{2/3} N^{2/3} \frac{(t-s)^2}{4(t+s)}} \right) \wedge 1 ds dt  \\
&\leq C  N^2\int  \int_{t-s>\delta}  \left ( \frac{1}{(2\pi (t+s))^{1/2}}  e^{-\beta^{2/3} N^{2/3} \frac{(t-s)^2}{4(t+s)}} \right) \wedge 1 ds dt  \\
&\quad + CN^2  \int  \int_{0<t-s \leq \delta}  \left ( \frac{1}{(2\pi (t+s))^{1/2}}  e^{-\beta^{2/3} N^{2/3} \frac{(t-s)^2}{4(t+s)}} \right) \wedge 1 ds dt  \\
&\leq C N^2 \int  \int_{t-s > \delta } \frac{1}{(2\pi (t+s))^{1/2}} e^{-\beta^{2/3} N^{2/3} \frac{(t-s)^2}{4(t+s)}}  ds dt   +C N^2\int_0^T\int_{t - \delta}^t  ds dt \\
&=:H_1(\beta,N,T) + H_2(\beta,N,T),
\end{aligned}
\end{equation}
where we define 
\begin{equation} \label{dl-1d} 
\delta =\beta^{-1/3} N^{-1/3}.
\end{equation} 
First we note that
\begin{equation} \label{b-est} 
 H_2(\beta,N,T) \le CN^2T\delta \leq \beta^{-1/3} N^{5/3}T.
\end{equation}
Let us change variables to 
\begin{align*}
a = t-s,  && b = t + s 
\end{align*}
and absorb the Jacobian determinant into the constant $C$. We find
\begin{equation} \label{eq:B-est}
\begin{split}
 H_1(\beta,N,T)&\le CN^2\int_{0}^{2T} \frac{1}{(2\pi b)^{1/2}} \left(\int_{\delta}^{\infty}
    e^{-\beta^{2/3} N^{2/3} \frac{a^2}{4b}}  da \right) db.
\end{split}
\end{equation}
Dealing with the inner integral, we use the bound on the integral over the 
Gaussian density to get,
\begin{equation} \label{new-b}
\begin{split}
\frac{1}{(2\pi b)^{1/2}} & \int_{\delta}^{\infty}e^{-\beta^{2/3} N^{2/3} 
    \frac{a^2}{4b}}  da  \\
& \leq
\frac{N^{-1/3}\beta^{-1/3}}{(2\pi b N^{-2/3}\beta^{-2/3})^{1/2}}  \int_{-\infty}^{\infty}e^{- \frac{a^2}{4bN^{-2/3}\beta^{-2/3} }}  da  \\ 
 &\leq CN^{-1/3}\beta^{-1/3}. 
\end{split}
\end{equation}
Applying \eqref{new-b} to the inner integral in \eqref{eq:B-est} with we get
\begin{equation} \label{eq:G-est}
\begin{split}
 H_1(\beta,N,T) &\le CN^2\int_{0}^{2T} CN^{-1/3}\beta^{-1/3} db\\
&\le C\beta^{-1/3}N^{5/3} T^{}. 
\end{split}
\end{equation}
 
Plugging-in our estimates from \eqref{b-est} and \eqref{eq:G-est} to \eqref{fg31} we get
\begin{equation}\label{fg332} 
 \tilde{J}_2(1,\beta,N,T,\hat{\mathbf{R}}_1)\leq C\beta^{-1/3}N^{5/3} T^{}. 
 \end{equation}
From \eqref{regions-1dim} and \eqref{dim1-j2-def} it follows that 
\begin{equation}  \label{fg32} 
\begin{aligned} 
 \tilde{J}_2(\beta,N,T,\hat{\mathbf{R}}_2) &\leq N^2 |\hat{\mathbf{R}}_2 |   \\
&\leq C N^2 \beta^{-1/3} N^{-1/3} T \\ 
&\leq C   \beta^{-1/3} N^{5/3} T. 
\end{aligned}
\end{equation} 
 From \eqref{fg432}, \eqref{fg332} and \eqref{fg32} we get Lemma \ref{dim1-lem-j2}.  
\end{proof} 
\subsection{Self-intersection occupation measure}  
Using \eqref{drift} and \eqref{kap-al1} we deduce by following similar lines as in \eqref{dens-felf}, \eqref{r-i-j1} and \eqref{j-1-dec}, we need to  bound the following integral: 
\begin{equation} \label{b-bnd41} 
\begin{split}
J_1&(1,\beta,N,T) := N\int_0^T\int_0^T \int_{\mathbf{B}_2(0)} f^{(1,1,\alpha)}_{t,s}(z)dz  ds dt \\ 
&=N\int_0^T\int_0^T \frac{1}{(2\pi (t-s))^{1/2}}  \int_{|z|<2} e^{-\frac{(z-\beta^{1/3} N^{1/3}(t-s))^2}{2(t-s)}}dzds dt.   
\end{split}
\end{equation}
In the following lemma we derive the bound on $ J_1(\beta, N,T)$. 
\begin{lemma}  \label{dim1-lem-j1}
There exists a constant $C_{\ref{dim1-lem-j1}}>0$ such that for all  $\beta, T \geq 1$ and $N\in\mathbb{Z}^+$ the following bound holds: 
\begin{equation*}
 J_1(1,\beta, N,T) \leq C_{\ref{dim1-lem-j1}}\beta^{2/3} N^{2/3}T.
\end{equation*}
\end{lemma}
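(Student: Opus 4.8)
The plan is to estimate the integral in \eqref{b-bnd41} directly, exploiting that the Gaussian density has its mean at $\beta^{1/3}N^{1/3}(t-s)$ while we only integrate $z$ over $\mathbf{B}_2(0)$. First I would split the region $\{0 \le s \le t \le T\}$ according to the size of the gap $u := t-s$. Setting $\delta := \beta^{-1/3}N^{-1/3}$, on the region where $u \le 8\delta$ the inner $z$-integral is trivially bounded by $1$, and the area of this region is at most $CT\delta$, so multiplying by $N$ gives a contribution of order $N \cdot T\delta = \beta^{-1/3}N^{2/3}T$, which is certainly below the claimed bound $\beta^{2/3}N^{2/3}T$ since $\beta \ge 1$.

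On the region where $u = t - s > 8\delta$, the drift has carried the mean of the Gaussian far from the origin: specifically $(2 - \beta^{1/3}N^{1/3}u)^2 \ge \tfrac12\beta^{2/3}N^{2/3}u^2$ as in \eqref{c-choice}, so for every $z$ with $|z| < 2$ we have the pointwise bound on the exponent $\exp\!\left(-\frac{(z - \beta^{1/3}N^{1/3}u)^2}{2u}\right) \le \exp\!\left(-\frac{\beta^{2/3}N^{2/3}u^2}{4u}\right) = \exp\!\left(-\tfrac14\beta^{2/3}N^{2/3}u\right)$. The inner $z$-integral over $\mathbf{B}_2(0)$ contributes only a factor $\le C$ (since the domain has bounded measure), and the $(2\pi u)^{-1/2}$ prefactor is $\le (2\pi \cdot 8\delta)^{-1/2} = C\beta^{1/6}N^{1/6}$ on this region. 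Changing variables to $u = t-s$ and the sum coordinate $b = t+s$ (absorbing the Jacobian into $C$, exactly as in the proof of Lemma~\ref{dim1-lem-j2}), the $b$-integral runs over an interval of length $O(T)$, and the $u$-integral is $\int_{8\delta}^{\infty} \exp(-\tfrac14\beta^{2/3}N^{2/3}u)\,du \le C\beta^{-2/3}N^{-2/3}$. Collecting the factors $N \cdot \beta^{1/6}N^{1/6} \cdot T \cdot \beta^{-2/3}N^{-2/3} = \beta^{-1/2}N^{1/2}T$, which again sits below $\beta^{2/3}N^{2/3}T$ for $\beta, N \ge 1$.

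Actually, one can get the stated bound more cheaply still: without even using the $(t-s)^{-1/2}$ prefactor sharply, bound $\int_{\mathbf{B}_2(0)} f^{(1,1,\alpha)}_{t,s}(z)\,dz \le 1$ on the near-diagonal region and use the Gaussian decay on the far region as above; the dominant term is the near-diagonal one of order $\beta^{-1/3}N^{2/3}T \le \beta^{2/3}N^{2/3}T$. I do not anticipate a serious obstacle here: the self-intersection bound in $d=1$ is genuinely easier than the cross-intersection bound of Lemma~\ref{dim1-lem-j2} because there is only one Brownian motion, hence a prefactor $N$ rather than $N^2$, and the arithmetic of exponents all works in our favor. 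The only point requiring a little care is making the change of variables $(s,t) \mapsto (u,b)$ respect the constraint $0 \le s \le t \le T$, i.e. $0 \le u \le b \le 2T - u$; but since we only need an upper bound, enlarging the $b$-range to $[0, 2T]$ is harmless. Putting the two regional estimates together yields $J_1(1,\beta,N,T) \le C\beta^{2/3}N^{2/3}T$, completing the proof of Lemma~\ref{dim1-lem-j1}.
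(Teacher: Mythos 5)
Your proof is correct and follows essentially the same route as the paper: you split at $t-s = 8\delta$ with $\delta = \beta^{-1/3}N^{-1/3}$, bound the near-diagonal contribution trivially by the region's area, and on the far-diagonal region use \eqref{c-choice} together with the change of variables $(s,t)\mapsto(u,b)$ — exactly the paper's decomposition into $\hat{\mathbf{R}}_1$, $\hat{\mathbf{R}}_2$. The only (harmless) deviation is that on the far-diagonal piece you bound the prefactor $(2\pi u)^{-1/2}$ by its value at $u=8\delta$ rather than keeping $u^{-1/2}$ in the Gaussian integral; this gives $\beta^{-1/2}N^{1/2}T$ instead of the paper's $\beta^{-1/3}N^{2/3}T$ for that piece, but both are below the claimed bound, and your near-diagonal term $\beta^{-1/3}N^{2/3}T$ dominates and matches what the paper actually establishes.
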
 
\begin{proof} 
Recalling $\hat{\mathbf{R}}_i$ defined in \eqref{regions-1dim} we denote 
\begin{equation} \label{dim1-j1-def} 
\tilde{J}_1(1,\beta,N,T,\hat{\mathbf{R}}_i) := N \iint_{\hat{\mathbf{R}}_i} \int_{\mathbf{B}_2(0)} f^{(1,1,\alpha)}_{t,s}(z)dz  ds dt, \quad i=1,2, 
\end{equation}
and note that 
\begin{equation*}
\tilde{J}_1(1,\beta,N,T) \leq \sum_{i=1}^2 \tilde{J}_1(1,\beta,N,T,\hat{\mathbf{R}}_i ). 
\end{equation*}
Using the fact that $\int_{\mathbf{B}_2(0)} f^{(1,1,\alpha)}_{t,s}(z)dz  \leq 1$ and \eqref{c-choice}, we get for $\delta$ as in \eqref{dl-1d}, 
\begin{equation} \label{b-bnd41a} 
\begin{split}
\tilde{J}_1(1,\beta, & N,T,\mathbf{R_1} )   \\
& \leq CN\int \int_{\hat{\mathbf{R}}_1 \cap \{t-s>\delta\}  } \frac{1}{2\pi (t-s)}  e^{-\frac{ \beta^{2/3} N^{2/3}(t-s)^2}{4(t-s)}}ds dt \\ 
&\quad  + CN\int \int_{\hat{\mathbf{R}}_1 \cap \{t-s \leq \delta\}  } ds dt.
 \end{split}
\end{equation}
Note that this integral is similar to \eqref{fg31} (with a difference by a factor of $N$) hence we get the bound  
 \begin{equation} \label{hj1} 
\tilde{J}_1(1,\beta,N,T,\hat{\mathbf{R}}_1 ) \leq C\beta^{2/3}N^{2/3} T. 
\end{equation}
Using again the trivial bound on the integral over $f^{(1,1,\alpha)}$ we get from \eqref{dim1-j1-def} and \eqref{regions-1dim} that
\begin{equation} \label{hj2} 
\begin{split}
\tilde{J}_1(1,\beta,N,T,\hat{\mathbf{R}}_2)  & \leq C  \iint_{\hat{\mathbf{R}}_2 }ds dt \leq C\beta^{-1/3} N^{2/3} T, 
 \end{split}
\end{equation}
where we used a similar bound as in \eqref{fg32} with a difference of a factor of $N$ in the last inequality. 

From \eqref{hj1},  \eqref{hj2} and \eqref{dim1-j1-def} we get the result of Lemma \ref{dim1-lem-j1}.
\end{proof} 

Now we are ready to prove Theorem \ref{prop-z} for $d=1$. 
\begin{proof} [Proof of Proposition \ref{prop-i-2}  for $d=1$] 
The proof follows similar lines to the proof in the case where $d=2,3$ only we now use Lemmas \ref{dim1-lem-j2} and \ref{dim1-lem-j1} instead of Proposition \ref{prop-j}. 
\end{proof} 

\begin{appendix}

\section{Proof of Lemma \ref{lemma-ber}}\label{secA1}

\begin{proof}
Such large deviations results are well-known, but they are usually 
expressed in asymptotic form.  We need an upper bound for the probability, 
and we give a self-contained proof.

Using Markov's inequality and the moment generating function of the 
binomial distribution, and
choosing $t>0$, for $\alpha\in(p,1)$ we get
\begin{equation}  \label{eq:large-dev}
P\left(S_{n}>\alpha n\right)
 \le \frac{E\left[\exp\left(tS_{n}\right)\right]}{\exp\left(\alpha tn\right)}
 = \left(e^{-\alpha t}\left[q+pe^t\right]\right)^{n} .
\end{equation}
Let $f(t)=e^{-\alpha t}[q+pe^t]$. 
We find that $f(t)\to\infty$ as $t\to\infty$. Since $\alpha>p$, we have
\begin{equation}  \label{eq:value-t-star}
f'(0)=p-\alpha <0.
\end{equation}
Thus $f$ achieves its minimum over $[0,\infty)$ in $(0,\infty)$.  Let $t^*$ 
be the infimum of those $t\in(0,\infty)$ for which the minimum is achieved.  

Solving $f'(t^*)=0$, we find $-\alpha q+(1-\alpha)pe^{t^*} = 0$ and so
\begin{equation*}
t^* = \log\frac{\alpha q}{(1-\alpha)p}.
\end{equation*}
Combining \eqref{eq:large-dev} and \eqref{eq:value-t-star} and substituting
$t=t^*$ gives
\begin{equation*}
P(S_n>\alpha n) \le \left(\frac{(1-\alpha)p}{\alpha q}\right)^{\alpha n}
   \left[q+p\frac{\alpha q}{(1-\alpha)p}\right]^n.  
\end{equation*}
This proves Lemma \ref{lemma-ber}.
\end{proof}

\end{appendix}

%\bibliographystyle{plain}
%\printindex
%\bibliography{polymer.bib}

\end{document}